\documentclass{article}

\usepackage{amsmath}
\usepackage{amsthm}
\usepackage{amsfonts}
\usepackage{graphicx}
\usepackage{enumitem}

\newtheorem{theorem}{Theorem}[section]
\newtheorem{lemma}[theorem]{Lemma}

\newtheorem{proposition}[theorem]{Proposition} 

\newtheorem{conjecture}[theorem]{Conjecture}
\theoremstyle{definition}
\newtheorem{definition}[theorem]{Definition}
\theoremstyle{remark}

\topmargin 1pt
\advance \topmargin by -\headheight
\advance \topmargin by -\headsep
\textheight 8.5in
\oddsidemargin 0pt
\evensidemargin \oddsidemargin
\marginparwidth 0.5in
\textwidth 6in

\parskip 1.5ex

\newcommand{\Int}{\mathrm{Int}}
\title{Off-diagonal ordered Ramsey numbers of matchings}
\author{Dhruv Rohatgi \\ MIT \\ drohatgi@mit.edu}
\date{}

\begin{document}
\maketitle

\begin{abstract}
For ordered graphs $G$ and $H$, the ordered Ramsey number $r_<(G,H)$ is the smallest $n$ such that every red/blue edge coloring of the complete graph on vertices $\{1,\dots,n\}$ contains either a blue copy of $G$ or a red copy of $H$, where the embedding must preserve the relative order of vertices. One number of interest, first studied by Conlon, Fox, Lee, and Sudakov, is the ``off-diagonal'' ordered Ramsey number $r_<(M, K_3)$, where $M$ is an ordered matching on $n$ vertices. In particular, Conlon et al.~asked what asymptotic bounds (in $n$) can be obtained for $\max r_<(M, K_3)$, where the maximum is over all ordered matchings $M$ on $n$ vertices. The best-known upper bound is $O(n^2/\log n)$, whereas the best-known lower bound is $\Omega((n/\log n)^{4/3})$, and Conlon et al.~hypothesize that $r_<(M, K_3) = O(n^{2-\epsilon})$ for every ordered matching $M$. We resolve two special cases of this conjecture. We show that the off-diagonal ordered Ramsey numbers for matchings in which edges do not cross are nearly linear. We also prove a truly sub-quadratic upper bound for random matchings with interval chromatic number $2$.
\end{abstract}

\section{Introduction}

A classical area of extremal combinatorics is Ramsey theory. Introduced by Ramsey \cite{Ramsey1930} and popularized by Erd\H{o}s and Szekeres \cite{Erdos1935}, the Ramsey number of a graph $G$, commonly denoted by $r(G)$, is the smallest $n$ so that every edge bicoloring of the complete graph $K_n$ contains a monochromatic copy of $G$. Shrinking the sizable gap between the asymptotic upper/lower bounds on $r(K_n)$ has been a major open problem for decades, spurring extensive work on a plethora of related questions in Ramsey theory.

One variant of Ramsey numbers which has recently received attention is the analogue for ordered graphs. An \textit{ordered graph} on $[n]$ is a graph on $n$ vertices which are given distinct labels in $\{1,\dots,n\}$. Given an ordered graph $G$, the \textit{ordered Ramsey number} of $G$, denoted by $r_<(G)$, is the smallest $n$ so that every edge bicoloring of the ordered complete graph on $n$ vertices contains a monochromatic copy of $G$ which preserves the relative vertex ordering of $G$. As with the unordered case, one can define the \textit{off-diagonal} ordered Ramsey number of two graphs $G$ and $H$, denoted by $r_<(G,H)$, as the smallest $n$ so that every edge bicoloring of the ordered complete graph on $n$ vertices contains either an order preserving red copy of $G$ or an order preserving blue copy of $H$.

The first systematic studies of ordered Ramsey numbers were conducted by Conlon, Fox, Lee, and Sudakov \cite{Conlon2014} and by Balko, Cibulka, Kr\'al, and Kyn\v cl \cite{Balko2015}. However, as pointed out by the authors of \cite{Conlon2014}, a number of classic results in extremal combinatorics can be reinterpreted as statements about ordered Ramsey numbers. For instance, Erd\H{o}s and Szekeres proved \cite{Erdos1935} that every sequence of at least $(n-1)^2 + 1$ distinct numbers contains either an increasing subsequence of length $n$ or a decreasing subsequence of length $n$. This result is implied by the bound $r_<(P_n, K_n) \leq (n-1)^2 + 1$, where $P_n$ is the $n$-vertex path imbued with the natural monotonic ordering: for any sequence of $n$ distinct numbers $x_1,\dots,x_n$, color $(i,j)$ red if $x_i < x_j$ and blue otherwise.

Perhaps the simplest nontrivial family of ordered graphs from the perspective of ordered Ramsey theory is \emph{matchings}, in which every vertex has degree $1$. Conlon, Fox, Lee, and Sudakov provide a number of bounds for general matchings, for matchings satisfying certain properties, and for off-diagonal ordered Ramsey numbers involving matchings. Relevant to this paper is their work on bounding the largest possible value of $r_<(M, K_3)$, where $M$ is a matching. They have the following result:

\begin{theorem}[Conlon, Fox, Lee, and Sudakov \cite{Conlon2014}]
There are positive constants $c_1$ and $c_2$ such that for all even positive integers $n$,
$$c_1 \left ( \frac{n}{\log n} \right)^{4/3} \leq \max_{M} \, r_<(M, K_3) \leq c_2 \frac{n^2}{\log n}$$ where the maximum is taken over all ordered matchings $M$ on $n$ vertices.
\end{theorem}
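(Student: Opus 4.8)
The plan is to prove the two inequalities separately, both building on the classical estimate $r(K_3,K_n)=\Theta(n^2/\log n)$ (the upper half being the Ajtai--Koml\'os--Szemer\'edi/Shearer bound, the lower half the near-optimal triangle-free graphs of Kim and of the triangle-free process). For the \emph{upper bound}, fix an ordered matching $M$ on $n$ vertices and put $N=r(K_3,K_n)$. Given any red/blue coloring of the complete graph on $\{1,\dots,N\}$ with no red $K_3$, its red graph is triangle-free, so by the definition of $r(K_3,K_n)$ there is a set $S=\{s_1<\dots<s_n\}$ of $n$ vertices spanning no red edge. Mapping the $i$-th vertex of $M$ to $s_i$ is order-preserving and sends every edge of $M$ to a blue edge, so the coloring contains a blue copy of $M$; hence $r_<(M,K_3)\le N=O(n^2/\log n)$, uniformly in $M$.

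For the \emph{lower bound}, the goal is to produce, for infinitely many even $n$, an ordered matching $M$ on $n$ vertices together with a red/blue coloring of the complete graph on $N=\Omega((n/\log n)^{4/3})$ vertices having no red $K_3$ and no blue $M$. Two observations shape the construction. First, a blue $K_n$ contains a blue copy of every ordered matching on $n$ vertices (map the $i$-th vertex to the $i$-th vertex of the clique), so ``no blue $M$'' forces ``no blue $K_n$''; equivalently the red graph $R$ must be triangle-free with independence number at most $n-1$, which is possible only for $N=O(n^2/\log n)$, and the extremal examples are pseudorandom, with degree and independence number both $\Theta(\sqrt{N\log N})$. Second, for such an $R$ the blue graph $\overline R$ is extremely dense and pseudorandom, so it contains an order-preserving copy of essentially every ordered graph on $n$ vertices; consequently $M$ cannot be chosen at random but must be tailored to $R$.

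To make this concrete I would take $R$ to be (essentially) a blow-up of a near-optimal triangle-free Ramsey graph $H$ on $t$ vertices by independent blocks of size $s$, so that $N=ts$: within each block all edges are blue, and between blocks $i$ and $j$ all edges are red exactly when $ij\in E(H)$. Then $R$ is triangle-free --- a red triangle would be a triangle in $H$, or would use two vertices of a single block, which are blue-adjacent --- and $\alpha(R)=s\cdot\alpha(H)$, so the requirement of no blue $K_n$ becomes $s\cdot\alpha(H)<n$. A blue copy of $M$ corresponds exactly to a non-decreasing map $\psi$ from the vertices of $M$ to $\{1,\dots,t\}$, all of whose fibers have size at most $s$, in which every edge of $M$ is either collapsed by $\psi$ or sent to a non-edge of $H$. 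I would then choose $M$ --- with interval chromatic number $2$, encoding a carefully chosen permutation, or built directly from a perfect matching of $H$ --- so that any such $\psi$ is forced to have image inside a set of blocks that is a clique in the complement of $H$; such a clique has at most $\alpha(H)$ blocks, which is impossible once $s\cdot\alpha(H)<n$. Since a near-optimal $H$ has $\alpha(H)=\Theta(\sqrt{t\log t})$, the constraint reads $s\sqrt{t\log t}=O(n)$, and balancing it against the further relation among $t$, $s$ and $n$ needed to rule out ``spread-out'' copies of $M$ is what produces $N=ts=\Theta((n/\log n)^{4/3})$.

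The step I expect to be the main obstacle is this last one: designing the matching $M$ and verifying that $\overline R$ contains no blue ordered copy of it. A union bound over the $\binom{N}{n}$ order-preserving placements of $M$ is hopelessly lossy --- it does not even yield $N=\omega(n)$ --- so one genuinely needs a structural argument, using the block structure of $R$ together with the pattern of $M$, to show that any blue copy of $M$ would have to compress into a blue clique on at least $n$ vertices, which $\overline R$ does not contain. The upper bound, the triangle-freeness of the blow-up, the bookkeeping on $\alpha(R)$, and the final parameter optimization are all routine.
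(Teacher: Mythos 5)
Your upper bound argument is correct and is exactly the observation this paper makes (and attributes to \cite{Conlon2014}): since every ordered graph on $n$ vertices order-embeds into $K_n$, one has $r_<(M,K_3)\le r_<(K_n,K_3)=r(3,n)=O(n^2/\log n)$ by Ajtai--Koml\'os--Szemer\'edi. Note, though, that the paper itself does not prove this theorem; it is quoted from Conlon, Fox, Lee, and Sudakov, and only the upper-bound remark appears here, so the real content you are being asked to supply is the lower bound.

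And there the proposal has a genuine gap. Everything you list as ``routine'' (triangle-freeness of the blow-up, $\alpha(R)=s\,\alpha(H)$, the parameter balancing) is indeed routine, but the theorem's entire difficulty sits in the step you defer: exhibiting an ordered matching $M$ such that the blue graph of your coloring contains no order-preserving copy of $M$, and you supply neither the construction of $M$ nor the structural argument ruling out blue copies. The specific mechanism you propose --- choosing $M$ so that every admissible map $\psi$ to the blocks is ``forced to have image inside a set of blocks that is a clique in the complement of $H$'' --- is not obviously attainable and is, on its face, too strong: a blue copy of $M$ only requires each individual edge of $M$ to be collapsed into a block or sent to a non-edge of $H$, and different edges may use different pairs of blocks, so the set of blocks hit by $\psi$ need not be independent in $H$. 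Ruling out these ``spread-out'' embeddings is precisely where a union bound fails (as you note) and where the actual argument of \cite{Conlon2014} does real work (their proof takes $M$ random and couples it with a pseudorandom triangle-free coloring, with a careful counting of compressed embedding patterns rather than of all placements). As written, the proposal establishes only the upper bound; the lower bound remains a plan whose key lemma is unproved and possibly mis-stated.
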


The upper bound in this theorem is in some sense trivial. Since every graph on $n$ vertices embeds in the complete graph $K_n$, and the ordered Ramsey number $r_<(K_n, K_3)$ is equal to the Ramsey number $r(n, 3)$, which has been asymptotically determined \cite{Ajtai1980, Kim1995} to be $\Theta(n^2/\log n)$, it follows (as pointed out in \cite{Conlon2014}) that $r_<(M, K_3) = O(n^2/\log n)$ for a matching $M$ on $n$ vertices. However, this bound does not make use of any properties of matching graphs, only making use of the fact that every graph on $n$ vertices can be embedded in $K_n$. For this reason and perhaps other reasons, Conlon, Fox, Lee, and Sudakov hypothesize \cite{Conlon2014} that the upper bound can be improved to $r_<(M, K_3) \leq n^{2-\epsilon}$ for some $\epsilon > 0$.

We contribute two results in the direction of this conjecture. We first look at the special case of ordered matchings where the edges do not cross. That is, for any two edges $(i,j)$ and $(k,l)$ with $i<j$ and $k<l$, the intervals $[i,j]$ and $[k,l]$ are either disjoint or nested one inside the other. We call the matchings which satisfy this condition ``parenthesis matchings'', after the useful fact that these matchings correspond with balanced parenthesis sequences. Indeed, it is this correspondence which partially motivates our proof of the following theorem.

\theoremstyle{theorem}
\newtheorem*{thm:pmatchings}{Theorem \ref{thm:pmatchings}}
\begin{thm:pmatchings}
For any $\epsilon > 0$ there is a constant $c$ such that every parenthesis matching $M$ on $n$ vertices has $$r_<(M, K_3) \leq cn^{1+\epsilon}.$$
\end{thm:pmatchings}

To state our second result, we must define the \textit{interval chromatic number} of an ordered graph. Analogous to the chromatic number of an unordered graph, the interval chromatic number $\chi_<(G)$ of a graph $G$ is the minimum number of contiguous intervals into which the vertex set must be split so that each interval is an independent set in $G$.

Conlon, Fox, Lee, and Sudakov present a number of general results accompanied by much stronger specific results for matchings with small interval chromatic number \cite{Conlon2014}. In a similar spirit, we prove a sub-quadratic bound on $r_<(M, K_3)$ for random matchings with interval chromatic number $2$.

\theoremstyle{theorem}
\newtheorem*{thm:rmatchings}{Theorem \ref{thm:rmatchings}}
\begin{thm:rmatchings}
There is a constant $c$ such that for every even $n$, if an ordered matching $M$ on $n$ vertices with interval chromatic number $2$ is picked uniformly at random, then $$r_<(M, K_3) \leq cn^{\frac{24}{13}}$$ with high probability.
\end{thm:rmatchings}
 
Observe that the statement is not probabilistic over bicolorings; rather, it is a true Ramsey-type result which applies to almost all matchings.

\subsection{Roadmap}

We outline the remainder of this paper. In Section~\ref{subsection:parenthesis}, we achieve a nearly linear bound for matchings whose edges do not cross. In Section~\ref{subsection:random}, we obtain a slightly sub-quadratic bound for random matchings with interval chromatic number $2$. Finally, in Section~\ref{section:conclusion} we outline possible directions for future research, describing a few of the many interesting questions about ordered Ramsey numbers which remain open.

Throughout the paper, we make no serious attempts to optimize constants.

\section{Parenthesis Matchings}\label{subsection:parenthesis}

Earlier we defined ``parenthesis matchings'' as matchings for which the edges do not cross. We claim without proof that every parenthesis matching corresponds uniquely with a balanced parenthesis sequence---that is, a sequence of correctly matched open and close parentheses. The bijection is straightforward; each matched pair of parentheses corresponds with an edge in the matching. See Figure~\ref{figure:pmatching} for an example.

\begin{figure}
\centering
\includegraphics[width=0.5\textwidth]{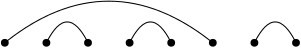}
\caption{The parenthesis matching corresponding to the parenthesis sequence $(()())()$.}\label{figure:pmatching}
\end{figure}

We start with perhaps the simplest nontrivial parenthesis matching, and work our way up to general parenthesis matchings. Define the nested matching graph $NM_k$ of size $k$ to be the graph on $[2k]$ where $(i,j)$ is an edge if and only if $i + j = 2k+1$. We establish the off-diagonal ordered Ramsey number of $NM_k$ up to constant factors:

\begin{proposition}\label{prop:nest}
For any positive integer $k$, $$4k - 2 < r_<(NM_k, K_3) \leq 6k.$$
\end{proposition}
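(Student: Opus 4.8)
The plan is to prove the two bounds separately: an explicit block coloring for the lower bound, and for the upper bound a reduction to a minimum-degree condition followed by a chain-decomposition (Mirsky) argument on the red edges.

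For the lower bound $r_<(NM_k,K_3) > 4k-2$, I would $2$-color $K_{4k-2}$ by splitting $[4k-2]$ into consecutive blocks $I_1=\{1,\dots,2k-1\}$ and $I_2=\{2k,\dots,4k-2\}$, coloring every edge inside a block red and every edge across the blocks blue. The blue graph is bipartite, hence triangle-free. To see there is no red $NM_k$: a copy of $NM_k$ uses $2k$ vertices $v_1<\dots<v_{2k}$, and since $I_1,I_2$ are intervals there is some $m$ with $v_1,\dots,v_m\in I_1$ and $v_{m+1},\dots,v_{2k}\in I_2$; as each block has only $2k-1$ vertices, $1\le m\le 2k-1$. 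For the outermost arc $(v_1,v_{2k})$ to be monochromatic red its two endpoints must lie in the same block, which forces $m\in\{0,2k\}$, a contradiction. Hence $r_<(NM_k,K_3)\ge 4k-1>4k-2$.

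For the upper bound $r_<(NM_k,K_3)\le 6k$, take any red/blue coloring of $K_{6k}$ with no blue triangle; I will locate a red $NM_k$. Since there is no blue triangle, the blue neighborhood of every vertex is a red clique, and a red clique on $2k$ vertices contains a red $NM_k$ (send its vertices, in order, to $1,\dots,2k$). So if some vertex has at least $2k$ blue neighbors we are done; otherwise every vertex has at most $2k-1$ blue neighbors, so the red graph $R$ has minimum degree at least $4k$ and therefore $e(R)\ge \tfrac12\cdot 6k\cdot 4k=12k^2$. Now I would reinterpret $NM_k$ as a chain: order the red edges by strict nesting of the open intervals they span, with $(a,b)$ above $(c,d)$ when $a<c<d<b$. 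A chain of $k$ red edges in this order has $2k$ distinct endpoints and is precisely a red copy of $NM_k$, so it suffices to find such a chain. By Mirsky's theorem, if no chain of length $k$ exists then $R$ can be covered by at most $k-1$ antichains. Bounding an antichain: if no two red edges are strictly nested, then grouping edges by their left endpoint, the sets of right endpoints appearing at successive left endpoints march monotonically along $[6k]$ and overlap only at single points, so each antichain has fewer than $2\cdot 6k=12k$ edges. Then $e(R)\le(k-1)\cdot 12k<12k^2$, contradicting $e(R)\ge 12k^2$; hence a chain of length $k$, and a red $NM_k$, must exist.

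The routine parts are the verification of the block coloring and the final arithmetic — the latter is exactly what forces the constant to be $6k$. The step needing the most care is the antichain bound in the nesting poset: showing that forbidding strictly nested pairs among the red edges limits their number to $O(k)$ rather than $O(k^2)$ is the key lemma that lets the edge count beat the Mirsky cover. One should also double-check the degenerate case $k=1$ and the inequalities $1\le m\le 2k-1$ used in the lower-bound construction.
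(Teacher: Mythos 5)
Your proposal is correct and essentially the same as the paper's proof: the identical two-red-block coloring for the lower bound, and for the upper bound the same dichotomy on blue degree, the same nesting poset on red edges with a $k$-chain giving a red $NM_k$, and the same $O(k)$ antichain bound via the monotone marching of right-endpoint intervals. Your use of Mirsky's theorem is only a cosmetic repackaging of the paper's explicit ``longest chain ending at $e$'' pigeonhole argument, and your min-degree count $e(R)\ge 12k^2$ is a slight streamlining of the paper's edge count.
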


\begin{proof}
The lower bound follows from a simple construction: color the ordered complete graph $K_{4k-2}$ such that $\{1,\dots,2k-1\}$ and $\{2k, \dots, 4k-2\}$ form two red cliques, and all remaining edges are blue. Then there are no blue triangles, and no red edge $(i,j)$ has $|i-j| > 2k-2$, so there cannot be a red matching on $2k$ vertices.

For the upper bound, pick an arbitrary bicoloring of $K_{6k}$. Suppose the graph contains no blue copies of $K_3$. If any vertex has blue degree at least $2k$, then there is a red clique of size $2k$, which must contain $M$. Otherwise, the number of blue edges is at most $6k^2$. Hence, the number of red edges is at least $12k^2 - 3k$. Let $E_R$ be the set of red edges, and define a strict partial order on $E_R$ by $(i,j) < (l,m)$ if $l < i < j < m$. We wish to show that there is a ``chain'' of edges $e_1,\dots,e_k$ with $e_1 < \dots < e_k$. 

For the sake of contradiction, suppose the contrary, so every chain has length at most $k-1$. Define a function $L\colon E_R \to \{1, \dots, k-1\}$ where $L(e)$ is the longest chain ending at $e$. Observe that $L^{-1}(n)$ is an ``anti-chain'' for each $n \in [k-1]$. That is, for any $e_1, e_2 \in L^{-1}(n)$, we cannot have $e_1 < e_2$ nor $e_2 < e_1$. 

Applying the pigeonhole principle, fix some $n$ such that $|L^{-1}(n)| \geq 12k$. For $1 \leq i \leq 6k$ let $a_i$ be the minimum index $j$ such that $(i, j) \in |L^{-1}(n)|$, and let $b_i$ be the maximum such index. Then $\sum_{i=1}^{6k} (b_i + 1 - a_i) \geq 12k$, so $\sum_{i=1}^{6k} (b_i - a_i) \geq 6k$. It follows that there exist indices $i < j$ with $b_i > a_j$. But then $i < j < a_j < b_j$, so edges $(i, a_i)$ and $(j, b_j)$ are comparable. This contradicts our claim that $L^{-1}(n)$ is an anti-chain, so there must be a chain of length at least $k$. The edges in the chain comprise the red embedding of $NM_k$ into the graph.
\end{proof}

We believe that the upper bound is far from optimal. In particular, we make the following conjecture.

\begin{conjecture}
For any positive integer $k$, $$r_<(NM_k, K_3) = 4k - 1.$$
\end{conjecture}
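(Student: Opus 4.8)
Since Proposition~\ref{prop:nest} already establishes $r_<(NM_k,K_3) \ge 4k-1$, the conjecture reduces to the upper bound $r_<(NM_k,K_3) \le 4k-1$: every red/blue coloring of the ordered complete graph on $\{1,\dots,4k-1\}$ with no blue triangle contains a red copy of $NM_k$, which (exactly as in the proof of Proposition~\ref{prop:nest}) is the same as a chain $e_1 < \dots < e_k$ of $k$ strictly nested red edges. I would prove this by strong induction on $k$; the base case $k=1$ is immediate, since a blue-triangle-free $K_3$ has a red edge.

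For the inductive step, fix such a coloring of $K_{4k-1}$ and suppose for contradiction that there is no red $NM_k$. Since the blue graph is triangle-free, the blue neighborhood of every vertex is a red clique, and a red clique on $2k$ vertices contains a red $NM_k$ (its $k$ concentric chords); so every blue degree is at most $2k-1$ and every red clique has at most $2k-1$ vertices. Next, if some red edge $(a,b)$ has span $b-a \ge 4k-4$, then $\{a+1,\dots,b-1\}$ induces a blue-triangle-free complete ordered graph on at least $4(k-1)-1$ vertices, which by the inductive hypothesis contains a red $NM_{k-1}$, and prepending $(a,b)$ yields a red $NM_k$; hence every red edge has span at most $4k-5$. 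Let $W \le 4k-5$ be the largest span of a red edge. Every edge of span greater than $W$ is blue, so vertex $4k-1$ is blue-adjacent to all of $\{1,\dots,4k-2-W\}$ and vertex $1$ is blue-adjacent to all of $\{W+2,\dots,4k-1\}$; by triangle-freeness each of these two sets is a red clique, so $4k-2-W \le 2k-1$, i.e.\ $2k-1 \le W \le 4k-5$, and writing $s := 4k-2-W$ we are reduced to the case where the first $s$ and the last $s$ vertices each span a red clique, with $3 \le s \le 2k-1$.

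This case is the crux, and I expect closing it to be the main obstacle. The coloring is now rigid, but the obstruction is sharp: the interior of any red edge contains at most $W-1 = 4k-3-s \le 4k-6$ vertices, one or two short of the $4k-5$ needed to rerun the induction inside a single red edge, so the deficit must be recovered from the red cliques at the two ends. For $k=2$ this is done by a short forced-coloring argument: the absence of a red $NM_2$ forces every red edge to have span at most $3$, which forces $\{1,2,3\}$ and $\{4k-3,4k-2,4k-1\}$ to be red triangles, which in turn forces the span-$3$ edges $(1,4)$ and $(4k-4,4k-1)$ to be blue, so that $\{1,4,4k-1\}$ is a blue triangle. I would try to generalize this — propagating the forced red end-cliques inward to force further blue edges and ultimately a blue triangle, or, equivalently, strengthening the induction hypothesis to one about blue-triangle-free $K_n$ in which the first $t$ and last $t$ vertices span red cliques (which leaves more room to recurse and to splice a red chain from an end clique onto a chain found in the interior). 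The blunter tools one has in hand are not enough by themselves: Mirsky's theorem only shows the red graph partitions into $k-1$ antichains of the nesting poset, each of size at most $2(4k-1)-3$, and pigeonhole over the $\Theta(k)$ concentric chord-families produces a red chain of length only about $k/2$.
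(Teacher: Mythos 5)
This statement is a conjecture in the paper; no proof is given there, so there is nothing of the author's to compare against, and the question is simply whether your argument closes it. It does not, and you say so yourself: after the (correct) reductions, the case $3 \le s \le 2k-1$ is left as a hope rather than an argument. To give credit where due: the reduction is sound — the lower bound does follow from Proposition~\ref{prop:nest}; blue-triangle-freeness plus the absence of a red $NM_k$ forces all blue degrees and all red cliques below $2k$; the induction correctly caps red spans at $4k-5$; and the forced red cliques on the first and last $s=4k-2-W$ vertices, with $2k-1 \le W \le 4k-5$, are all justified. Your $k=2$ computation is also correct and actually pins down $r_<(NM_2,K_3)=7$, which is a genuine (if small) step beyond what the paper establishes.

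The gap is that the remaining case is the entire difficulty, and neither of your proposed escapes is carried out. ``Propagating forced colorings inward'' works for $k=2$ only because $W\le 3$ makes the coloring essentially rigid; for general $k$ the window $2k-1\le W\le 4k-5$ is wide, the two end cliques of size $s$ are far from determining the rest of the coloring, and no contradiction is derived. The alternative of strengthening the induction hypothesis (blue-triangle-free $K_n$ with red cliques on the first and last $t$ vertices) is plausible but unproved, and the splicing step it relies on is not automatic: a chain of nested red edges found in the interior of a long red edge and the concentric chords of an end clique need not nest compatibly, and you give no mechanism for choosing them so that they do. Note also that the extremal coloring in Proposition~\ref{prop:nest} shows the conjectured value is exactly tight, so any counting with slack (Mirsky/antichain or pigeonhole arguments, as you observe) is doomed; the missing ingredient has to be a structural argument for the middle regime, and that is precisely what is absent. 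As it stands this is partial progress toward the conjecture, not a proof.
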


The nested matching can be used to bound the corresponding ordered Ramsey numbers for a more general class of matchings. As we will build up more complex parenthesis matchings from simpler ones, we need a way to keep track of the growth of $r_<(M, K_3)$. One approach is the following lemma:

\begin{lemma}\label{lemma:nestsimple}
Let $A_1,\dots,A_{2k-1}$ be (possibly empty) balanced parenthesis sequences inducing matchings $M_1,\dots,M_{2k-1}$. Then $$(A_1(A_2(\cdots(A_{k-1}(A_k)A_{k+1})\cdots)A_{2k-2})A_{2k-1})$$ is a balanced parenthesis sequence which induces some matching $M$, with $$r_<(M, K_3) \leq r_<(NM_{k+t}, K_3),$$ where $t = \sum_{i=1}^k \max(r_<(M_i, K_3), r_<(M_{2k-i}, K_3))$.
\end{lemma}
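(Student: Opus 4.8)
The plan is to set $N = r_<(NM_{k+t}, K_3)$, take an arbitrary red/blue coloring of the ordered complete graph on $[N]$, and show it contains a blue $K_3$ or an order-preserving red copy of $M$. I may assume there is no blue $K_3$, so by the definition of $N$ the coloring contains a red copy of $NM_{k+t}$, i.e. a family of $k+t$ pairwise nested red edges whose vertex set I will write as $a_1 < a_2 < \dots < a_{k+t} < b_{k+t} < \dots < b_2 < b_1$ with each $\{a_i, b_i\}$ red. The strategy is to single out $k$ of these nested edges to play the role of the outer ``$NM_k$ skeleton'' visible in the parenthesis word for $M$, and to find a red copy of each $M_i$ inside the gap of the skeleton into which $A_i$ is inserted; the remaining $t$ nested edges will be used only to certify that those gaps contain enough vertices to invoke the definition of $r_<(M_i, K_3)$.

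Write $t_p = \max(r_<(M_p,K_3), r_<(M_{2k-p},K_3))$, so that $t = \sum_{p=1}^k t_p$, and choose skeleton indices $j_1 = 1$ and $j_{p+1} = j_p + t_p + 1$ for $1 \le p \le k-1$; then $j_k = k + t - t_k \le k+t$, so the nested red edges $\{a_{j_1},b_{j_1}\} \supset \dots \supset \{a_{j_k},b_{j_k}\}$ all exist and form a red $NM_k$. For $1 \le p \le k-1$, the interval of $[N]$ strictly between $a_{j_p}$ and $a_{j_{p+1}}$ contains the embedded left-endpoints $a_{j_p+1}, \dots, a_{j_{p+1}-1}$, hence has at least $j_{p+1}-j_p-1 = t_p \ge r_<(M_p, K_3)$ vertices; symmetrically the interval strictly between $b_{j_{p+1}}$ and $b_{j_p}$ has at least $t_p \ge r_<(M_{2k-p},K_3)$ vertices; and the interval strictly between $a_{j_k}$ and $b_{j_k}$ contains the $2(k+t-j_k) = 2t_k \ge r_<(M_k,K_3)$ embedded vertices that are nested inside $\{a_{j_k},b_{j_k}\}$. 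Each of these intervals inherits a coloring with no blue $K_3$ and has at least the stated number of vertices, so (restricting to a sub-interval of exactly that size if necessary and applying the definition of the corresponding off-diagonal Ramsey number) it contains a red copy of $M_p$, of $M_{2k-p}$, and of $M_k$, respectively.

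It then remains to check that the red $NM_k$ skeleton together with these red sub-copies is an order-preserving red copy of $M$. Identifying the $p$-th nested pair of the skeleton with $\{a_{j_p},b_{j_p}\}$, the word $(A_1(A_2(\cdots(A_k)\cdots)A_{2k-2})A_{2k-1})$ places $A_p$ (for $p<k$) into the gap between $a_{j_p}$ and $a_{j_{p+1}}$, places $A_{2k-p}$ into the gap between $b_{j_{p+1}}$ and $b_{j_p}$, and places $A_k$ into the gap between $a_{j_k}$ and $b_{j_k}$ --- exactly the three families of intervals produced above, which are pairwise disjoint and avoid the skeleton vertices; hence their union realizes $M$ in red with the correct ordering. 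I expect the only real work to be the index bookkeeping of the middle paragraph: one must choose the $j_p$ so that both halves of every skeleton annulus (which is why the definition of $t$ uses a $\max$) and the innermost region are simultaneously large enough, where the factor $2$ gained by counting both $a$- and $b$-endpoints inside the innermost pair is what makes the last constraint automatic. Degenerate cases in which some $A_i$ is empty are routine, since an empty interval trivially contains the empty matching.
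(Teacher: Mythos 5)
Your proof is correct and is essentially the paper's argument: the paper deletes nested edges greedily from the innermost pair outward to create room for each $M_i$, and your ``skipped'' edges between consecutive skeleton indices $j_p$ play exactly the role of those deleted pairs, with the same accounting of $k$ saved edges plus $t=\sum_p t_p$ sacrificed ones. Writing the choice non-adaptively with explicit indices rather than as an iterative deletion is only a presentational difference.
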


\begin{proof}
Pick an arbitrary bicoloring of the complete graph on $r_<(NM_{k+t}, K_3)$ vertices. Assume that there is no blue copy of $K_3$. Then there is a red copy of $NM_{k+t}$. Starting with the innermost edge of the matching and working outwards, delete as many matched pairs as necessary until there is space for a red copy of $M_k$. Every deletion increases the number of inner vertices by at least one, so there will be space after at most $r_<(M_k, K_3)$ steps. Save the current innermost matched pair (which will correspond to the parentheses around $A_k$), and continue deleting subsequent matches until there is space for a red copy of $M_{k-1}$ (to the left of the saved match) and a red copy of $M_{k+1}$ (to the right of the saved match). The number of deletions is at most $\max(r_<(M_{k-1}, K_3), r_<(M_{k+1}, K_3))$; save the new innermost match.

Repeating the above process $k-2$ more times yields a complete red copy of $M$. Note that the process does not run out of matches, since only $k$ matches are saved, and at most $t$ matches are deleted.
\end{proof}

In the above lemma, the Ramsey number of each matching $M_i$ is multiplied by a constant factor arising from the Ramsey number of a nested matching $NM_n$. It is possible to decrease the dependence on the central matching $M_k$, in exchange for larger constants on the remaining matchings and on the length of the matching.

\begin{lemma}\label{lemma:nestcomplex}
Let $A_1,\dots,A_{2k-1}$ be balanced parenthesis sequences inducing matchings $M_1,\dots,M_{2k-1}$. Let $M$ be the parenthesis matching induced by the expression $$(A_1(A_2(\cdots(A_{k-1}(A_k)A_{k+1})\cdots)A_{2k-2})A_{2k-1}).$$ If $l = \sum_{i \neq k} r_<(M_i, K_3)$ and $t = r_<(M_k, K_3)$, then $$r_<(M, K_3) \leq t + 20(k + l + |M_k|).$$
\end{lemma}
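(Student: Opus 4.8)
The plan is to imitate the proof of Lemma~\ref{lemma:nestsimple} but to be more economical about the large ``outer'' structure we request. Instead of invoking $r_<(NM_{k+t},K_3)$, which is linear in $t = r_<(M_k,K_3)$ scaled by the constant $6$, we want to pay the cost of $t$ only once rather than with a constant multiplier. The key idea is to arrange the host coloring so that we can find a red copy of $M_k$ in one designated \emph{block} of vertices, and simultaneously find a long nested chain in the \emph{rest} of the graph to accommodate the $2k-1$ outer sequences $A_1,\dots,A_{2k-1}$ together with the $k$ nesting parentheses.

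Concretely, take a bicoloring of $K_N$ where $N = t + 20(k + l + |M_k|)$, and assume there is no blue $K_3$. First I would locate a contiguous interval $I$ of the right size in which to embed $M_k$: by the first step of the Proposition~\ref{prop:nest} argument, either some vertex has large blue degree (yielding a red clique big enough to contain $M_k$, in which case we can essentially contract $M_k$ to a short gadget and finish easily), or the red graph is dense and Dilworth/pigeonhole gives a red nested chain. The more delicate point is that I want to split $[N]$ into (roughly) the first half, a middle block $B$ reserved for $M_k$, and the second half, so that a red nested chain of length $k + l$ threads \emph{around} $B$ — i.e. each edge of the chain has one endpoint left of $B$ and one endpoint right of $B$ — while inside $B$ there is a red copy of $M_k$. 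Running the chain-extraction argument of Proposition~\ref{prop:nest} on the bipartite-like red graph between the two halves (avoiding $B$), and using that the non-$B$ part still has $\Omega(N^2)$ red edges, should produce a nested red chain of length at least $k+l$ straddling $B$. Then, as in Lemma~\ref{lemma:nestsimple}, I delete matched pairs from the inside of this chain until there is room for red copies of $M_{k-1}$ on the left and $M_{k+1}$ on the right of $B$, then of $M_{k-2}$ and $M_{k+2}$, and so on; the total number of deletions needed is $\sum_{i\neq k} r_<(M_i,K_3) = l$, and we keep $k$ nesting pairs and the block $B$ (already carrying $M_k$) in the middle. The constant $20$ and the additive $|M_k|$ are slack to absorb: the size of $B$ (which is $O(|M_k|)$ since $M_k$ lives on its own vertex set), the $k$ saved pairs, the $l$ deletions, and the Ramsey overhead $O(k + l)$ from the chain-extraction step.

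The step I expect to be the main obstacle is arranging the middle block $B$ and guaranteeing that a sufficiently long red nested chain straddles it. In Proposition~\ref{prop:nest} the chain can use any vertices; here we must forbid the interior block $B$ and insist the chain edges jump over it, and we must do this while $B$ is itself chosen so as to host $M_k$. The cleanest way around this is probably to choose the decomposition greedily: first use density of red edges to find a red nested chain $C$ of length $k + l + c|M_k|$ for a suitable constant $c$ in all of $[N]$ (this needs $N = \Omega(k+l+|M_k|)$, consistent with our bound), then take $B$ to be the vertex set strictly inside the innermost few edges of $C$ — this interval is automatically ``straddled'' by the outer edges of $C$, and it has size $\Omega(|M_k|)$ if we budgeted $C$ long enough, so either it contains a red $M_k$ directly, or it has high blue degree somewhere, or (by Proposition~\ref{prop:nest} applied \emph{inside} $B$) it contains a further red nested chain that we can re-route $M_k$ into. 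Once $B$ carries a red $M_k$, the outer part of $C$ gives the $k$ nesting pairs and, after at most $l$ further deletions, room for all the $M_i$ with $i\neq k$, exactly as in the earlier lemma. Checking that the various linear quantities sum to at most $20(k+l+|M_k|)$ is then a routine accounting, which (following the paper's stated convention) I would not optimize.
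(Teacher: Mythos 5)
Your opening plan is the right one, and in fact it is the paper's: split the $t+20(k+l+|M_k|)$ vertices into a left block $X$ and right block $Z$ of size $10(k+l+|M_k|)$ each and a middle block $Y$ of size exactly $t$; by definition of $t=r_<(M_k,K_3)$, a blue-triangle-free coloring gives a red $M_k$ inside $Y$ for free, and if a red $NM_{k+l}$ straddles $Y$ (left endpoints in $X$, right endpoints in $Z$) one finishes exactly as in Lemma~\ref{lemma:nestsimple}, deleting at most $l$ pairs. Where your write-up has a genuine gap is in how the straddling chain is guaranteed. The paper's dichotomy is confined entirely to the bipartite pair $(X,Z)$ and is independent of $t$: if there is no chain of length $k+l$ between $X$ and $Z$, then the red edges there partition into fewer than $k+l$ antichains of size at most $|X|+|Z|$, so there are at most $20(k+l+|M_k|)^2$ red edges and hence at least $80(k+l+|M_k|)^2$ blue edges between $X$ and $Z$; some $v\in X$ then has $8(k+l+|M_k|)$ blue neighbours in $Z$, and (no blue $K_3$) these form a red clique of size at least $|M|$, which swallows \emph{all} of $M$ at once. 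Your version of this step is off in two ways: the claim that the non-$B$ part has ``$\Omega(N^2)$ red edges'' cannot be right, since $N$ contains $t$ and $t$ may dwarf $(k+l+|M_k|)^2$ --- the edge counting must live in $X\times Z$ only; and your blue-degree escape clause yields only a clique ``big enough to contain $M_k$'' followed by an unexplained ``contract $M_k$ to a short gadget,'' whereas the argument needs (and, with threshold $8(k+l+|M_k|)\ge|M|$, easily gets) a clique containing the whole of $M$.

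The more serious problem is the fix you propose for what you identify as the main obstacle. Taking $B$ to be the interior of the innermost edges of a chain, of size only $\Omega(|M_k|)$, cannot host a red copy of $M_k$ in general: the only size guarantee available is $r_<(M_k,K_3)=t$ vertices, and $t$ may well be superlinear in $|M_k|$ --- if a block of $O(|M_k|)$ vertices always sufficed, the main theorem of this section would be nearly immediate. Neither of your fallbacks inside $B$ repairs this: a high blue degree inside $B$ only gives a clique of size $O(|M_k|)$ minus what the greedy embedding actually needs, and ``re-routing $M_k$ into a further red nested chain'' is impossible, since $NM_j$ contains only nested matchings as ordered subgraphs (already $()()$ does not embed into a nested matching), while $M_k$ is an arbitrary parenthesis matching. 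The correct resolution is to reserve the middle block with exactly $t$ vertices from the outset, as in your first paragraph and in the paper, and to resolve the ``does a chain straddle it?'' question by the blue-edge count between $X$ and $Z$ described above, not by shrinking the middle block.
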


\begin{proof}
Pick an arbitrary bicoloring of the ordered complete graph on $t + 20(k + l + |M_k|)$ vertices. Assume that there is no blue copy of $K_3$. Let $X$ denote the first $10(k + l + |M_k|)$ vertices; let $Y$ denote the next $t$ vertices; and let $Z$ denote the remaining $10(k + l + |M_k|)$ vertices. Observe that $Y$ contains a red copy of $M_k$.

Suppose that there is a red copy of $NM_{k+l}$ in $X \cup Z$, where the first $k+l$ vertices are in $X$ and the remaining $k+l$ vertices are in $Z$. Then, just as in Lemma~\ref{lemma:nestsimple}, we can start with the innermost matching and work outwards, deleting matchings to make space for red copies of $M_1,\dots,M_{k-1}$ and $M_{k+1},\dots,M_{2k-1}$. Only $l$ matchings need be deleted, and by the end, the graph $X \cup Y \cup Z$ contains a red copy of $M$.

Now suppose the converse, so the maximum number of nested matchings from $X$ to $Z$ is less than $k + l$. As in Proposition~\ref{prop:nest}, define the natural strict partial order on the red edges between $X$ and $Z$. A set of nested edges forms a ``chain'', and the largest anti-chain contains no more than $|X| + |Z| = 20(k + l + |M_k|)$ red edges. We know that the red edges can be partitioned into less than $k+l$ anti-chains, so the number of red edges between $X$ and $Z$ is at most $20(k+l+|M_k|)(k+l)$, which we upper bound by $20(k+l+|M_k|)^2$.

Thus, the number of blue edges between $X$ and $Z$ is at least $80(k+l+|M_k|)^2$. Hence there must be a vertex $v \in X$ with at least $8(k+l+|M_k|)$ blue edges into $Z$. Since the graph was assumed to be blue $K_3$-free, it follows that the set of blue neighbors of $v$ forms a red clique of size $8(k+l+|M_k|)$. As $|M| \leq 8(k+l+|M_k|)$, we conclude that the bicoloring contains a red copy of $M$.
\end{proof}

Every parenthesis matching is in a bijection with an ordered, rooted tree. The above lemma allows us to bound the off-diagonal Ramsey number of the tree by the Ramsey numbers of all the branches off any path. Intuitively (and we will formalize the intuition later), this bound is strong on unbalanced trees and weak on well-balanced trees. For the latter case, we have the following simple lemma. While it is a special case of the above lemma aside from unimportant constant factors, we will use it for a different purpose (namely, well-balanced trees), so we state it separately for clarity.

\begin{lemma}\label{lemma:surround}
Let $A$ be a balanced parenthesis sequence inducing the matching $M$. Then $(A)$ is a balanced parenthesis sequence inducing some matching $M'$, and $$r_<(M', K_3) \leq r_<(M, K_3) + |M'| + 1.$$
\end{lemma}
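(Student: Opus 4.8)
The plan is to mimic the structure of Lemma~\ref{lemma:nestcomplex} but in the much simpler setting where there are no branches off the central path other than the single pair of outer parentheses. Take an arbitrary bicoloring of the ordered complete graph on $r_<(M,K_3) + |M'| + 1$ vertices and assume it has no blue $K_3$. Let $v$ be the first vertex and $w$ the last vertex. The idea is to find a red copy of $M$ strictly among the middle $r_<(M,K_3)$ vertices (which exists by definition of $r_<(M,K_3)$ and the no-blue-$K_3$ assumption), and then close it off with the edge $(v,w)$ together with the outermost parentheses of $(A)$ — provided $(v,w)$ is red. So first I would dispose of the case where $(v,w)$ is blue.

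If $(v,w)$ is blue, then since there is no blue triangle, every other vertex $u$ has at least one red edge to $\{v,w\}$; but I want something stronger. Consider the $|M'| - 2 + 1 = |M'| - 1$ vertices other than $v$ and $w$... actually a cleaner route: if $(v,w)$ is blue, look at the red neighborhoods. Each of the remaining $r_<(M,K_3) + |M'| - 1 \ge |M'| - 1$ interior vertices sends a red edge to $v$ or to $w$, so one of $v,w$ — say $v$ — has red degree at least $(|M'|-1)/2$ among them; that is not obviously enough. The robust fix is to not split off $v$ and $w$ symmetrically. Instead: let $U$ be the last $|M'|$ vertices. If some vertex among the first $r_<(M,K_3)+1$ vertices sends $|M'|-1$ red edges into later vertices forming, together with it, a red $NM_{?}$... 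Let me simplify by using the clean argument actually intended: put $v$ = vertex $1$ and $w$ = vertex $n$ where $n = r_<(M,K_3)+|M'|+1$. Case 1: $(v,w)$ red. Vertices $2,\dots,n-1$ number $r_<(M,K_3)+|M'|-1 \ge r_<(M,K_3)$, so they contain a red copy of $M$ (using $A$'s structure), and in fact a red copy using only vertices in $[2,n-1]$; since $A$ lies strictly inside a matched outer pair in $(A)$, prepending $v$ and appending $w$ and using the red edge $(v,w)$ yields a red $M'$. Case 2: $(v,w)$ blue. Then neither $v$ nor $w$ has another blue edge to a common vertex; in particular, among vertices $2,\dots,n-1$, each has a red edge to $v$ or a red edge to $w$. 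Actually the simplest finish: the blue neighbors of $v$ (excluding $w$) form a red clique; if $v$ has $|M'|$ blue neighbors we are done. Otherwise $v$ has at most $|M'|-1$ blue neighbors, so $v$ has red degree at least $n - 1 - (|M'|-1) = r_<(M,K_3) + 1$, and its red neighborhood — being of size $\ge r_<(M,K_3)$ and blue-$K_3$-free — contains a red $M$; but this red $M$ sits inside $N_{\mathrm{red}}(v)$, and prepending $v$ (with all the requisite red edges from $v$) gives a red $M'$ since the outermost parenthesis of $(A)$ only needs one new vertex joined to the leftmost and one joined to... hmm, $v$ is joined to its red neighbors, and $w$-role can be played by the largest vertex of the red $M$-copy? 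No — the outer pair of $(A)$ matches the new leftmost vertex to the new rightmost vertex, not to an existing one.

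So the correct bookkeeping is: the outer pair contributes two new vertices, a left one $\ell$ and a right one $r$, with the edge $(\ell,r)$, and $\ell$ precedes all of $A$ while $r$ follows all of $A$; no other edges touch $\ell$ or $r$. Hence I only need a red edge $(\ell,r)$ with $\ell$ before and $r$ after a red copy of $M$. With $n = r_<(M,K_3)+|M'|+1$ vertices: the interior $[2,n-1]$ has $\ge r_<(M,K_3)$ vertices, so it has a red copy of $M$ on some vertex set $S\subseteq[2,n-1]$ — but I cannot control where $S$ lands, so I cannot guarantee a chosen $\ell$ is before all of $S$. To fix this, reserve the slack: take $\ell=1$, $r=n$, and note $|[2,n-1]| = r_<(M,K_3)+|M'|-1 \ge r_<(M,K_3)$ guarantees a red $M$ strictly inside, automatically with $1$ before and $n$ after. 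If $(1,n)$ is red, done. If $(1,n)$ is blue: by blue-$K_3$-freeness, for every vertex $u\in[2,n-1]$ at least one of $(1,u),(u,n)$ is red; pick the color classes $B_1=\{u:(1,u)\text{ blue}\}$ and $B_n=\{u:(u,n)\text{ blue}\}$, which are disjoint (a common element with $(1,n)$ blue gives a blue triangle) and each is a red clique, so $|B_1|,|B_n|\le|M'|-1$ lest we find a red $M'$ inside one of them (a red clique of size $|M'|$ contains every ordered matching on $|M'|$ vertices, including $M'$). Then $|[2,n-1]\setminus(B_1\cup B_n)| \ge r_<(M,K_3)+|M'|-1 - 2(|M'|-1) = r_<(M,K_3)-|M'|+1$, which is not obviously $\ge r_<(M,K_3)$ — so this particular slack is too small. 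The main obstacle, then, is squeezing out enough room in the blue-$(1,n)$ case; I expect the right resolution is either to increase the additive term slightly (the paper's ``we make no serious attempts to optimize constants'' gives latitude, though the statement commits to $+|M'|+1$) or, better, to observe that if $(1,n)$ is blue then $1$ and $n$ together dominate $[2,n-1]$ in red, so some vertex among $\{1,n\}$ has red degree $\ge (r_<(M,K_3)+|M'|-1)/2$ into $[2,n-1]$; combined with blue-$K_3$-freeness its red neighborhood is a red clique, and a red clique of size $\ge|M'|$ already contains $M'$ outright — so it suffices that $r_<(M,K_3)+|M'|-1 \ge 2(|M'|-1)$, i.e. $r_<(M,K_3)\ge|M'|-1$, which holds since any bicoloring on $|M'|-1<|M'|$ vertices trivially realizes $M'$ is impossible, forcing $r_<(M,K_3)\ge|M'|\ge|M'|-1$... wait, $r_<(M,K_3)$ is for $M$ not $M'$; still $r_<(M,K_3)\ge|M|=|M'|-2$, and we need $\ge|M'|-1$, off by one, again absorbable. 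I would therefore present the argument with the clean ``red clique of size $|M'|$ contains $M'$'' step handling the blue-$(1,n)$ case via an averaging bound on red degrees from $\{1,n\}$ into the interior, and the direct ``red $M$ plus the edge $(1,n)$'' step handling the other case, checking that the chosen additive term $|M'|+1$ (up to a trivial adjustment by one, which the paper's constant-agnostic stance permits) suffices in both.
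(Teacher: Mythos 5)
Your Case 1 (the edge $(1,n)$ is red) is fine, but your Case 2 does not close, and the final ``fix'' rests on a false claim: in a blue-$K_3$-free coloring it is the \emph{blue} neighborhood of a vertex that forms a red clique, not the red neighborhood. So from ``some vertex among $\{1,n\}$ has red degree at least $(r_<(M,K_3)+|M'|-1)/2$ into the interior'' you cannot conclude anything about a large red clique; red neighborhoods carry no such structure. Your earlier, correct attempt in Case 2 (the disjoint blue neighborhoods $B_1,B_n$, each a red clique of size at most $|M'|-1$) you rightly abandon as leaving too few vertices to re-find a red copy of $M$, so as written the blue-$(1,n)$ case is simply not handled. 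The underlying problem is the symmetric setup: by insisting that the outer parenthesis pair be the specific edge $(1,n)$, you force yourself to deal with the event that this one edge is blue, and the additive budget $|M'|+1$ is too tight for that. (Also note the statement commits to the exact bound $r_<(M,K_3)+|M'|+1$, so the ``absorb an off-by-one into constants'' escape hatch is not available when proving this lemma as stated, though that is a secondary issue.)

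The paper's proof avoids all of this with an asymmetric split that you never quite reach. Write $t=r_<(M,K_3)$ and $n=|M'|$, and color $[t+n+1]$ with no blue triangle. A red copy of $M$ already sits inside $\{2,\dots,t+1\}$. Now use vertex $1$ as the left endpoint of the outer pair and let the right endpoint range over the last block $\{t+2,\dots,t+n+1\}$: if any of these $n$ edges from vertex $1$ is red, it closes off the red $M$ into a red $M'$; if all are blue, then (by the blue-neighborhood-is-a-red-clique observation, applied to vertex $1$) the last $n$ vertices form a red clique of size $|M'|$, which contains $M'$ outright. The point is that only one of $n$ candidate edges needs to be red, rather than one prescribed edge, and failure of all of them hands you the red clique for free.
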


\begin{proof}
Let $t = r_<(M, K_3)$ and let $n$ be the number of vertices in matching $M'$. Pick an arbitrary bicoloring of the ordered complete graph on $[t+n+1]$. Suppose there are no blue triangles. Then there is a red copy of $M$ in $\{2, \dots, t+1\}$. So if there is a red edge from $1$ to any of $\{t+2,\dots,t+n+1\}$, we have found a red copy of $M'$. Otherwise, every edge from $1$ to $\{t+2,\dots,t+n+1\}$ is blue, so $\{t+2,\dots,t+n+1\}$ form a red clique of size $n$, which must contain the matching $M'$. 
\end{proof}

With the above lemmas, we can prove a subquadratic bound on the Ramsey numbers of all balanced parenthesis matchings. Two convexity results are needed; we postpone their proofs to Appendix~\ref{section:inequalities}.

\begin{lemma}\label{lemma:convex1}
Let $a_0, a_1, a_2, \dots,a_k \geq 0$ and $\delta > 1$ and $m > 0$ be real numbers. Let $r = {m}^{-1/(\delta-1)}$. If $s = \sum_{i=0}^k a_i \geq 1$ and $a_i \leq rs$ for all $1 \leq i \leq k$, then $$m(a_0 + ca_1^\delta + \dots + ca_k^\delta) \leq cs^\delta$$ for any $c \geq m$.
\end{lemma}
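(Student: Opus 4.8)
The plan is to \emph{linearize}: the only troublesome terms are the convex powers $a_i^\delta$ for $i\ge 1$, and the hypothesis $a_i\le rs$ lets us replace each one by a linear function of $a_i$, after which everything collapses using $c\ge m$ and $s\ge 1$.

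First I would record the elementary inequality that for any real $\delta>1$ and any $a$ with $0\le a\le B$ we have $a^\delta=a\cdot a^{\delta-1}\le a\cdot B^{\delta-1}$. Applying this with $a=a_i$ and $B=rs$ for each $i\in\{1,\dots,k\}$ gives $a_i^\delta\le (rs)^{\delta-1}a_i$. The point of the definition $r=m^{-1/(\delta-1)}$ is precisely that $r^{\delta-1}=1/m$, so $(rs)^{\delta-1}=s^{\delta-1}/m$ and hence $m\,c\,a_i^\delta\le c\,s^{\delta-1}a_i$. Summing over $i$ and using $\sum_{i=0}^k a_i=s$,
$$ m\bigl(ca_1^\delta+\dots+ca_k^\delta\bigr)\;\le\; c\,s^{\delta-1}\sum_{i=1}^k a_i \;=\; c\,s^{\delta-1}\,(s-a_0). $$

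Then I would add the $ma_0$ term: the left-hand side of the claimed inequality is at most
$$ ma_0+c\,s^{\delta-1}(s-a_0)\;=\;c\,s^{\delta}-\bigl(c\,s^{\delta-1}-m\bigr)a_0. $$
Since $s\ge 1$ and $\delta-1>0$, we have $s^{\delta-1}\ge 1$, so $c\,s^{\delta-1}\ge c\ge m$; as $a_0\ge 0$, the subtracted term is nonnegative, and we conclude $m(a_0+ca_1^\delta+\dots+ca_k^\delta)\le c\,s^{\delta}$, as desired.

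There is no real obstacle here; the only things to watch are (i) the use of $s^{\delta-1}\ge 1$, which is exactly where $s\ge 1$ enters, and (ii) the degenerate cases $k=0$ or some $a_i=0$, which are automatically covered. It is worth noting that the hypothesis $a_i\le rs$ for $i\ge 1$ is indispensable and is used precisely in the linearization step: without an upper bound on the $a_i$, a single term $ca_i^\delta$ could already be comparable to $cs^\delta$, defeating the inequality. One could alternatively prove the lemma by fixing $s$ and maximizing the convex function $a_0+c\sum_{i\ge1}a_i^\delta$ over the polytope $\{a_i\ge0,\ \sum a_i=s,\ a_i\le rs\}$ via its extreme points, but that requires a more involved case analysis, so the linearization above is the cleaner route.
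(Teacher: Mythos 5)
Your proof is correct, and it takes a genuinely more direct route than the paper. The paper first performs a smoothing/exchange step: using convexity of $x^\delta$, it shifts mass between any two coordinates lying strictly between $0$ and $rs$ until all but one of the $a_i$ ($i\ge 1$) equal $0$ or $rs$, and only then computes, applying the bound $a_n^\delta\le a_n(rs)^{\delta-1}$ to the single leftover term. You skip the reduction entirely and apply that same linearization $a_i^\delta\le a_i(rs)^{\delta-1}=a_i s^{\delta-1}/m$ to every term at once, after which the conclusion follows from $\sum_{i\ge 1}a_i=s-a_0$, $c\ge m$, and $s^{\delta-1}\ge 1$ exactly as you say; all the hypotheses ($a_i\le rs$ for $i\ge 1$, $r=m^{-1/(\delta-1)}$, $s\ge 1$, $a_0\ge 0$) are used in the right places. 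What your version buys is brevity and the avoidance of the extreme-point/WLOG bookkeeping (arguing that the exchange terminates and that the maximizing configuration has the claimed form); it also adapts immediately to give Lemma~\ref{lemma:convex2}, since summing $a_i^\delta\le a_i(rs)^{\delta-1}$ there yields $r^{\delta-1}s^\delta$ directly. What the paper's smoothing argument buys is a reusable template (it is invoked again verbatim for Lemma~\ref{lemma:convex2}) and a picture of where the bound is tight, namely when the mass is concentrated in blocks of size $rs$, but for this particular inequality your linearization is the cleaner proof.
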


\begin{lemma}\label{lemma:convex2}
Let $a_1, \dots a_k \geq 0$ and $\delta \geq 1$ be real numbers. Let $r \in (0,1)$. If $s = \sum_{i=1}^k a_i$ and $a_i \leq rs$ for all $1 \leq i \leq k$, then $$a_1^\delta + \dots + a_k^\delta \leq r^{\delta-1}s^\delta.$$
\end{lemma}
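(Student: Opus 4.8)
The plan is to exploit the trivial pointwise consequence of the constraint $a_i \le rs$. The key observation is that for each $i$ we may write $a_i^\delta = a_i \cdot a_i^{\delta-1}$, and since $\delta - 1 \ge 0$ the map $x \mapsto x^{\delta-1}$ is nondecreasing on $[0,\infty)$, so $a_i^{\delta-1} \le (rs)^{\delta-1}$. This converts each summand into a linear function of $a_i$ multiplied by a common constant factor, after which the conclusion falls out by summing.

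Concretely, I would first dispose of the degenerate cases. If $s = 0$ then every $a_i = 0$ and both sides vanish (using $0^\delta = 0$, valid since $\delta \ge 1 > 0$); if $\delta = 1$ the claimed inequality reads $\sum_i a_i \le s$, which holds with equality. So assume $s > 0$ and $\delta > 1$, apply the pointwise bound $a_i^{\delta-1} \le (rs)^{\delta-1}$, and sum:
\[
\sum_{i=1}^k a_i^\delta \;=\; \sum_{i=1}^k a_i \cdot a_i^{\delta-1} \;\le\; (rs)^{\delta-1}\sum_{i=1}^k a_i \;=\; (rs)^{\delta-1} s \;=\; r^{\delta-1} s^\delta,
\]
which is exactly the desired inequality, completing the argument.

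There is essentially no obstacle here; the only points needing a word of care are the monotonicity of $x \mapsto x^{\delta-1}$ on $[0,\infty)$ (so that $a_i \le rs$ implies $a_i^{\delta-1} \le (rs)^{\delta-1}$) and the edge cases $s = 0$ and $\delta = 1$. Note that the hypothesis $r \in (0,1)$ is not actually used in deriving the inequality — it only matters downstream, where $r^{\delta-1} < 1$ yields the geometric decay that makes the lemma useful. One could instead phrase the same estimate as an application of Jensen's inequality to the convex function $t \mapsto t^\delta$ after normalizing the $a_i$ to sum to $1$, but the direct factorization above is shorter and avoids introducing weights.
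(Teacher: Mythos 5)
Your proof is correct, and it takes a genuinely different (and simpler) route than the paper. The paper proves this lemma by a smoothing argument: using convexity of $x \mapsto x^\delta$, it shifts mass between the $a_i$ until all but one of them equals either $0$ or $rs$, and then verifies the inequality in that extremal configuration via $n-1 = (s-a_n)/(rs)$. You instead use the one-line pointwise factorization $a_i^\delta = a_i \cdot a_i^{\delta-1} \le a_i (rs)^{\delta-1}$ and sum, which is shorter, avoids the extremal-case bookkeeping, and as you note does not even need $r < 1$. The main thing the paper's heavier method buys is uniformity: the same smoothing template is reused for Lemma~\ref{lemma:convex1}, where the extra $a_0$ term and the constants $m$ and $c$ make a direct pointwise bound less immediate, so the author simply ran the same machine twice. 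For the statement at hand, your argument is complete; the edge cases $s=0$ and $\delta=1$ are handled appropriately, and the monotonicity of $x \mapsto x^{\delta-1}$ on $[0,\infty)$ is exactly the point that needs (and receives) mention.
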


In the following proof we'll use the bijection between parenthesis matchings on $n$ vertices and ordered rooted trees of size $s = n/2 + 1$. The basic idea is to induct on tree size and decompose the tree into smaller trees by one of two methods, depending on the relative weights of the root's child subtrees.

Call an edge \textit{$r$-heavy} if $s_\text{child} \geq r \cdot s_\text{parent}$, where $s_\text{child}$ is the size of the child subtree and $s_\text{parent}$ is the size of the parent subtree. If the inequality does not hold, call the edge \textit{$r$-light}. Similarly call a vertex $r$-heavy or $r$-light if its parent edge is $r$-heavy or $r$-light, respectively.

If all children of the root are $r$-light for an appropriate choice of $r$ (slightly less than $1$), we apply the inductive hypothesis to each child separately, and use Lemma~\ref{lemma:surround} to obtain a bound for the entire tree. Since every child subtree is a constant factor smaller than the entire tree, the lemma intuitively yields a sufficiently good recurrence.

If however the root has an $r$-heavy child, Lemma~\ref{lemma:surround} does not suffice. Instead we trace a path of heavy edges from the root down, decomposing the tree into a number of branches, as well as possibly some subtrees at the tail end of the path. Here we use Lemma~\ref{lemma:nestcomplex}. We know that every branch is $(1-r)$-light, so can afford to multiply the sum of Ramsey numbers of the branches by $20$ in the lemma. We only know the tail subtrees to be $r$-light, which is why they are treated differently in the lemma.

Formalizing the above proof sketch requires some manipulation of inequalities and applications of Lemma~\ref{lemma:convex1} and Lemma~\ref{lemma:convex2}. We work through these below.

\begin{theorem}\label{thm:pmatchings}
For any $\epsilon > 0$ there is a constant $c$ such that every parenthesis matching $M$ on $n$ vertices has $r_<(M, K_3) \leq cn^{1+\epsilon}$.
\end{theorem}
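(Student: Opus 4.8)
The plan is to set up an induction on the size $s = n/2+1$ of the ordered rooted tree corresponding to $M$, proving a bound of the form $r_<(M,K_3) \le c s^{1+\epsilon}$ for a suitable constant $c = c(\epsilon)$ and base cases handled by choosing $c$ large. Fix a parameter $\delta = 1+\epsilon$ and a weight-threshold $r = r(\epsilon) \in (0,1)$ close to $1$; the value of $r$ will be dictated by the convexity lemmas (roughly, we need $r^{\delta-1}$ bounded away from $1$ by enough to absorb the additive and multiplicative losses, so $r$ is chosen as a function of $\epsilon$ after the recursion is written down). The inductive step splits into the two cases foreshadowed in the roadmap, according to whether the root has an $r$-heavy child.

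First I would handle the \emph{light} case: every child subtree $T_1,\dots,T_k$ of the root has size $s_i \le r(s-1) \le rs$. Here the tree $M$ is obtained by surrounding the forest $T_1 \cdots T_k$ with one matched pair, but to apply Lemma~\ref{lemma:surround} I first need a bound on the matching induced by the concatenation $A_1\cdots A_k$; that follows by iterating Lemma~\ref{lemma:surround}-type reasoning, or more cleanly by viewing the whole forest as sitting inside the child subtrees of a single added root and invoking the inductive bounds $r_<(M_i,K_3)\le c s_i^\delta$ together with a union-of-red-cliques argument (essentially the proof of Lemma~\ref{lemma:surround}, applied once per child). The total is then at most something like $\sum_i c s_i^\delta + O(n)$, and Lemma~\ref{lemma:convex2} with the hypothesis $s_i \le rs$ gives $\sum_i s_i^\delta \le r^{\delta-1} s^\delta$; since $r^{\delta-1} < 1$, the slack $c(1-r^{\delta-1})s^\delta$ comfortably swallows the $O(n) = O(s)$ lower-order term once $c$ is large, closing the recursion.

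Next the \emph{heavy} case: starting at the root, repeatedly follow an $r$-heavy child edge, obtaining a path $v_0=\text{root}, v_1, \dots, v_p$ that terminates either at a leaf or at a vertex all of whose children are $r$-light. Removing this path decomposes $M$ into the branch subtrees hanging off the path (each of which, being a sibling of a heavy vertex, is $(1-r)$-light, i.e.\ has size at most $(1-r)$ times its parent's size) together with the subtrees hanging below $v_p$ (which are only guaranteed $r$-light). This is exactly the shape $(A_1(A_2(\cdots(A_k)\cdots)))$ to which Lemma~\ref{lemma:nestcomplex} applies, with the $M_i$ for $i\ne k$ being the branches and $M_k$ being the matching induced by the forest below $v_p$. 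Lemma~\ref{lemma:nestcomplex} then yields $r_<(M,K_3) \le t + 20(k + l + |M_k|)$ where $t = r_<(M_k,K_3)$ and $l = \sum_{i\ne k} r_<(M_i,K_3) \le \sum_{i\ne k} c\,|M_i|^\delta/2^\delta$ (writing branch sizes $b_i$, this is $\le c\sum b_i^\delta$ up to constants). Now apply the inductive hypothesis to $M_k$ (its tree has size $\le r s$, so $t \le c(rs)^\delta = c r^\delta s^\delta$) and Lemma~\ref{lemma:convex1} to the quantity $r^\delta \cdot(\text{size of }M_k\text{'s tree}) + 20\sum_i b_i^\delta$: the hypotheses $b_i \le rs$ and the branches-plus-tail sizes summing to $s-p \le s$ are precisely what Lemma~\ref{lemma:convex1} needs (with the ``$m$'' there corresponding to $r^\delta$ and $r$ in the lemma to our threshold), giving a clean bound $c s^\delta$ after absorbing the $20(k+|M_k|)\le O(s)$ additive term into the slack.

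The main obstacle I anticipate is bookkeeping the interplay between the three sources of loss in the heavy case — the multiplicative factor $20$ on the branches, the multiplicative factor $r^\delta$ (not $1$) on the tail, and the additive $O(s)$ — and checking that a single choice of $r$ (depending only on $\epsilon$) simultaneously makes the light-case recursion and the heavy-case recursion contract. Concretely, one must verify that $20 \cdot (1-r)^{\delta-1} + r^\delta < 1$ is achievable by taking $r$ close enough to $1$ (the first term $\to 0$, the second $\to 1$, so this needs $\delta > 1$ strictly and a careful limit), and this is exactly the inequality hidden inside Lemma~\ref{lemma:convex1} via $r = m^{-1/(\delta-1)}$. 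Once the constants are pinned down, both cases reduce to $r_<(M,K_3) \le c s^\delta - (\text{positive multiple of }s^\delta) + O(s) \le c s^\delta$, completing the induction; translating back via $s = n/2+1$ and adjusting $c$ gives $r_<(M,K_3) \le c\,n^{1+\epsilon}$.
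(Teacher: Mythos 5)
Your overall skeleton (induction on the corresponding tree, the light/heavy dichotomy, feeding the heavy path into Lemma~\ref{lemma:nestcomplex}, and closing with the two convexity lemmas) is the same as the paper's, and your light case is fine. The heavy case, however, has a genuine gap, concentrated in the treatment of the tail. You take $M_k$ to be the entire forest hanging below the last heavy-path vertex $v_p$ and assert ``its tree has size $\le rs$, so $t \le c(rs)^{1+\epsilon}$.'' That is false: only each \emph{individual} child subtree of $v_p$ is $r$-light; their union has total size $s_{v_p}-1$, and since every heavy edge satisfies $s_{\mathrm{child}} \ge r\,s_{\mathrm{parent}}$, the size $s_{v_p}$ can be as large as $s-1$ (e.g.\ a root with a single child $v_1=v_p$ all of whose children are light). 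With the correct bound $t \le c\,s_{v_p}^{1+\epsilon}$ there is essentially no slack left to absorb the additive $20(k+l+|M_k|)=\Theta(s)$ term, so your induction does not close. The paper's fix is exactly at this point: apply the inductive hypothesis to each light child of $v_p$ separately and use Lemma~\ref{lemma:convex2} to gain the factor $r^{\epsilon}$ relative to the tail's \emph{own} total, so that $c\sum_{i\le k'}(s^b_i)^{1+\epsilon} + 23\sum_{i\le k'} s^b_i \le c\bigl(\sum_{i\le k'} s^b_i\bigr)^{1+\epsilon}$ once $c = 23/(1-r^{\epsilon})$; it is this gain, not a small tail size, that swallows the linear terms coming from $|M_k|$.

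Your proposed contraction check also cannot work as stated: for $\epsilon\le 1$ and every $r\in(0,1)$ one has $1-r^{1+\epsilon}\le(1+\epsilon)(1-r)<20(1-r)\le 20(1-r)^{\epsilon}$, so the inequality $20(1-r)^{\epsilon}+r^{1+\epsilon}<1$ fails identically; taking $r\to 1$ does not help. The paper never needs such an inequality, because the tail enters Lemma~\ref{lemma:nestcomplex} with coefficient $1$ (that is the point of that lemma), and the factor-$23$ loss sits only on $s^h+\sum_{i>k'}s^b_i$. That quantity is then handled by a sub-case split you are missing: if it is at least $23^{-1/\epsilon}s$, each branch (being $(1-r)$-light with $r=1-23^{-2/\epsilon}$) is at most a $23^{-1/\epsilon}$-fraction of it, which is exactly the hypothesis of Lemma~\ref{lemma:convex1} --- note that lemma's hypothesis is relative to the sum of its own arguments, not to $s$, which is why your invocation ``branches-plus-tail sizes summing to $s-p\le s$'' does not match it; if instead the quantity is smaller than $23^{-1/\epsilon}s$, one uses the crude bound $23c(\cdot)^{1+\epsilon}$ together with $(1-x)^{1+\epsilon}+23x^{1+\epsilon}\le 1$ for $x<23^{-1/\epsilon}$. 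The two resulting terms are then combined by superadditivity of $x^{1+\epsilon}$, since the tail children, the branches, and the heavy-path vertices partition the $s$ tree vertices.
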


\begin{proof}
Let $\epsilon > 0$. Set $r = 1 - 23^{-2/\epsilon}$, and set $c = 23/(1 - r^\epsilon)$. A parenthesis matching on $n$ vertices uniquely corresponds with an ordered rooted tree of size $s = n/2 + 1$. We induct on the tree size $s$. If $s = 1$, the corresponding matching is the empty matching on $0$ vertices, for which the claim is trivially true. Fix an ordered rooted tree of size $s > 1$, corresponding to a matching $M$. There are two cases which we will treat separately; either the tree root has an $r$-heavy child, or not.

Suppose that the tree root does not have an $r$-heavy child. Let $s_1,\dots,s_k$ be the sizes of the child subtrees of the root. Let $M_1,\dots,M_k$ be the matchings corresponding to the respective subtrees, and let $t_i = r_<(M_i, K_3)$ for each $i \in [k]$. With a slight abuse of notation, identifying the matchings with their parenthesis sequences, we have $$M = (M_1)(M_2)\dots(M_k).$$ Lemma~\ref{lemma:surround} provides the bound $r_<((M_i), K_3) \leq t_i + 2s_i + 1$. Since the Ramsey number of a union of ordered graphs on disjoint intervals of vertices is subadditive, it follows that $$r_<(M, K_3) \leq \sum_{i=1}^k (t_i + 2s_i + 1) \leq 3s + \sum_{i=1}^k t_i.$$ By the inductive hypothesis and Lemma~\ref{lemma:convex2} (using the assumption that every subtree is $r$-light), we have $$r_<(M,K_3) \leq 3s + \sum_{i=1}^k cs_i^{1+\epsilon} \leq 3s + cr^\epsilon s^{1+\epsilon} \leq cs^{1+\epsilon}.$$ The last step follows since $c$ was chosen to be sufficiently large.

The remaining case to consider is if the tree root has a heavy child. Then there is some path which starts at the root and consists entirely of heavy edges (possibly only one edge, or possibly more). Let $s^b_1,\dots,s^b_k$ be the sizes of all subtrees which branch off the heavy path, and let $s^h$ be the (vertex) size of the heavy path. Let $M^b_1,\dots,M^b_k$ be the corresponding matchings, and let $t^b_i = r_<(M^b_i, K_3)$ for each $i \in [k]$. For ease of notation, suppose that the deepest vertex in the heavy path has $k'$ children, and its child subtrees are indexed $1 \dots k'$. The whole matching $M$ can be decomposed into a nested matching along with embedded matchings $(M^b_1),\dots,(M^b_k)$. For instance, if $k = 3$ and $k' = 1$ then one possibility is $M = ((M^b_2)(()(M^b_1))(M^b_3))$. By Lemma~\ref{lemma:surround}, the following bound holds for every matching $M^b_i$: $$r_<((M^b_i), K_3) \leq t^b_i + 3s^b_i.$$ So by Lemma~\ref{lemma:nestcomplex}, we have $$r_<(M, K_3) \leq \sum_{i=1}^{k'} \left(t^b_i + 3s^b_i\right) + 20 \left(s^h + \sum_{i=k'+1}^k \left(t^b_i + 3s^b_i\right) + \sum_{i=1}^{k'} s^b_i \right).$$ By the inductive hypothesis, it follows that 
\begin{eqnarray*}
r_<(M, K_3) 
&\leq& \sum_{i=1}^{k'} \left(c\left(s^b_i\right)^{1+\epsilon} + 3s^b_i \right) \\
& & {} + 20 \left ( s^h + \sum_{i=k'+1}^k \left(c\left(s^b_i\right)^{1+\epsilon} + 3s^b_i\right) + \sum_{i=1}^{k'} s^b_i \right).
\end{eqnarray*}

Reordering terms and absorbing the term $3 \sum_{i=k'+1}^k s^b_i$ into the outer constant factor through the bound $c \geq 20$, we get
\begin{eqnarray}\label{eq:overall}
r_<(M, K_3)
&\leq& c\sum_{i=1}^{k'} \left(s^b_i\right)^{1+\epsilon} + 23 \sum_{i=1}^{k'} s^b_i \nonumber \\
& & {} + 23 \left ( s^h + c \sum_{i=k'+1}^k \left(s^b_i\right)^{1+\epsilon} \right). 
\end{eqnarray}

To bound the first two terms of Equation~\ref{eq:overall}, we observe that for each $i \leq k'$, subtree $i$ is $r$-light, and therefore $s^b_i \leq rs$. An application of Lemma~\ref{lemma:convex2}, along with the bound $cr^\epsilon + 23 \leq c$, gives 
\begin{align}\label{eq:deepsubtrees}
c\sum_{i=1}^{k'} \left(s^b_i\right)^{1+\epsilon} + 23 \sum_{i=1}^{k'} s^b_i 
&\leq cr^\epsilon \left ( \sum_{i=1}^{k'} s^b_i \right)^{1+\epsilon} + 23 \sum_{i=1}^{k'} s^b_i \nonumber \\
& \leq c \left ( \sum_{i=1}^{k'} s^b_i \right)^{1+\epsilon}.
\end{align} 
For the remaining terms of Equation~\ref{eq:overall}, observe that for any $i > k'$, subtree $i$ has an $r$-heavy sibling, so $s^b_i$ is at most $1-r$ times the parent's subtree size, and therefore at most $(1-r)s$. We will use one of two approaches (below, \textbf{A} and \textbf{B}) depending on the cumulative weight of these subtrees.
\begin{itemize}[leftmargin=*]
\item[\textbf{A.}] If $s^h + \sum_{j=k'+1}^{k} s^b_j \geq 23^{-1/\epsilon}s$, then we can bound $s^b_i \leq 23^{1/\epsilon}(1-r)\left(s^h + \sum_{j=k'}^{k} s^b_j\right)$ for all $i > k'$. We know that $c \geq 23$ and $23^{1/\epsilon}(1-r) \leq 23^{-1/\epsilon}$, so an application of Lemma~\ref{lemma:convex1} yields 
\begin{equation}\label{eq:branch:heavy}
23 \left ( s^h + c\sum_{i = k'+1}^k \left(s^b_i\right)^{1+\epsilon} \right) \leq c \left ( s^h + \sum_{i=k'+1}^k s^b_i \right)^{1+\epsilon}.
\end{equation}

Summing together the bounds from Equation~\ref{eq:deepsubtrees} and Equation~\ref{eq:branch:heavy} and applying the most basic convexity bound, we get the desired bound
\begin{align*}
r_<(M, K_3) 
&\leq c \left ( \sum_{i=1}^{k'} s^b_i \right)^{1+\epsilon} + c \left ( s^h + \sum_{i=k'+1}^k s^b_i \right)^{1+\epsilon} \\
&\leq cs^{1+\epsilon}.
\end{align*}

\item[\textbf{B.}] If $s^h + \sum_{j=k'+1}^k s^b_j < 23^{-1/\epsilon} s$, then we are unable to bound $s^b_i$ against $s^h + \sum_{j=k'+1}^k s^b_j$, but we know that the latter quantity is much smaller than $s$. So we instead use the weak bound
\begin{equation}\label{eq:branch:light}
23\left(s^h + c\sum_{i=k'+1}^k \left(s^b_i\right)^{1+\epsilon}\right) \leq 23c\left(s^h + \sum_{i=k'+1}^k s^b_i \right)^{1+\epsilon}.
\end{equation}

Now we combine Equation~\ref{eq:deepsubtrees} with Equation~\ref{eq:branch:light}, using the simple inequality $(1-x)^{1+\epsilon} + 23x^{1+\epsilon} \leq 1$ for $x \in (0, 23^{-1/\epsilon})$, and obtain
\begin{align*}
r_<(M, K_3) 
&\leq c \left ( \sum_{i=1}^{k'} s^b_i \right)^{1+\epsilon} + 23c \left ( s^h + \sum_{i=k'+1}^k s^b_i \right)^{1+\epsilon} \\
&\leq cs^{1+\epsilon}.
\end{align*}

\end{itemize}

This completes the induction.
\end{proof}

\section{Random Matchings with $\chi_<(M) = 2$}\label{subsection:random}

Recall that the \textit{interval chromatic number} $\chi_<(G)$ of an ordered graph $G$ is the minimum number of contiguous intervals into which the vertex set must be split so that each interval is an independent set in $G$.

In this section, we show that for almost every matching $M$ with interval chromatic number $2$, the bound of $\widetilde{O}(n^{2})$ on $r_<(M, K_3)$ can be beaten. More specifically, we exhibit a condition on $M$ which is sufficient to guarantee an improved bound on $r_<(M, K_3)$, and then prove that a random matching with interval chromatic number $2$ satisfies this condition with high probability.

\begin{figure}
\centering
\includegraphics[width=0.5\textwidth]{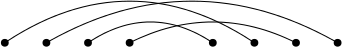}
\caption{A matching $M$ with interval chromatic number $2$, and corresponding permutation $\pi(M) = (2,4,1,3)$.}\label{figure:icn2}
\end{figure}

The set of matchings on $2n$ vertices with interval chromatic number $2$ is in bijection with the permutation group $S_n$, and it is often notationally convenient to examine the permutation corresponding to a given matching. See Figure~\ref{figure:icn2} for an example.

\begin{definition}
Let $M$ be an ordered matching on $[2n]$ with interval chromatic number $2$. Then its ``corresponding permutation'' $\pi(M)$ is the permutation on $[n]$ which maps $i$ to $j-n$ for every edge $(i,j) \in M$.
\end{definition}

\begin{definition} We say that a given permutation $\pi \in S_n$ contains an ``exact pattern'' $\rho$ if $\rho$ is an ordered subset of $[n]$ and there are indices $1 \leq i_1 < \dots < i_k \leq n$, where $k = |\rho|$, such that $\pi(i_j) = \rho(j)$ for all $j$.
\end{definition}

For instance, the permutation $\pi = (3,5,6,1,2,4)$ contains the exact pattern $(6,1,4)$ but does not contain the exact pattern $(1,2,3)$.

We are interested in using exact patterns as a metric for the ``intersection'' of two permutations. Specifically, we make the following definition.

\begin{definition}
Let $\pi, \sigma \in S_n$ be permutations. Define the ``ordered intersection'' of $\pi$ and $\sigma$, denoted $\Int(\pi, \sigma)$, to be the largest $k$ such that both $\pi$ and $\sigma$ share an exact pattern of length $k$.
\end{definition}

In the theorem below, we do something slightly stronger than bounding the Ramsey number $r_<(M, K_3)$ for certain matchings $M$. Rather, we show that in a blue $K_3$-free graph on $2n$ vertices, there is a tradeoff between finding a red copy of the matching $M$ in the bipartite subgraph $[1,n] \cup [n+1, 2n]$ and finding a large red clique (which of course contains every matching of that size) in $[1,n]$ or symmetrically in $[n+1, 2n]$.

Observe that a trivial claim, following immediately from unordered Ramsey theory, is ``every bicoloring of $2n$ vertices contains either a blue triangle of a red clique of size $\Theta((n \log n)^{1/2})$.'' This is of course the best possible claim, in that $r(n, 3) = \Theta(n^2/\log n)$. The following theorem shows that the claim can be improved---that is, there is a red clique of size $\omega((n \log n)^{1/2})$---under an added assumption about the absence of a red matching satisfying certain conditions.

\begin{theorem}\label{theorem:cliquematching}
Fix $\epsilon \in (0, 1)$ and $\alpha, \beta > 0$ with $\alpha + \beta \leq \epsilon/4$. Let $M$ be an ordered matching on $2n^{1/2 + \alpha}$ vertices with interval chromatic number $2$, such that the corresponding permutation $\pi = \pi(M)$ satisfies $\Int(\pi(M), \pi(M)+h) \leq n^{(1-\epsilon)(1/2+\alpha)}$ for every $h \in [n^{1/2 + \alpha}]$. Then every red/blue coloring of the ordered complete graph on $[2n]$ contains either:
\begin{itemize}
\item a blue copy of $K_3$,
\item a red copy of $K_{n^{1/2 + \beta}/4 - n^{\epsilon/4}/2}$, or
\item a red copy of $M$ within the bipartite subgraph $[1,n] \cup [n+1,2n]$.
\end{itemize}
\end{theorem}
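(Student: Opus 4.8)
The plan is to run the standard "dichotomy" argument for $r(n,3)$ (build a large red clique greedily, one vertex at a time, unless some vertex has large blue degree), but instead of stopping when the red clique reaches the trivial size $\Theta((n\log n)^{1/2})$, continue and harvest many red cliques of the target size $\approx n^{1/2+\beta}$ sitting in controlled locations. Concretely, assume the coloring has no blue $K_3$ and no red clique of size $s:=n^{1/2+\beta}/4 - n^{\epsilon/4}/2$. First I would restrict attention to the bipartite graph between $L=[1,n]$ and $R=[n+1,2n]$; our goal is to embed $M$ there using $m:=n^{1/2+\alpha}$ red edges that realize the permutation pattern $\pi(M)$. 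The idea is to process the vertices of $L$ (and the required pattern positions) left to right and, at each step, maintain a red clique on one side whose "blue neighborhood structure" forces the next matched vertex on the other side into the right relative-order slot. Since no red clique reaches size $s$, whenever the greedily-grown red clique would exceed $s$ we instead find a vertex of blue degree $\ge s$ into the remaining pool, and its blue neighborhood is a red clique of size $\ge s$ (no blue triangle), which we place as a fresh block. Counting: the number of vertices "used up" per step of the embedding is at most $s$ (the size of a block) plus the number of deletions, and we have $n$ vertices on each side, so we can afford roughly $n/s \approx n^{1/2-\beta}$ blocks; since $\beta<\epsilon/4$ and $m=n^{1/2+\alpha}$ with $\alpha+\beta\le\epsilon/4$, there is enough room for the $m$ matched pairs.

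The key device for getting the \emph{order-preserving} embedding of $M$ is the hypothesis $\Int(\pi,\pi+h)\le n^{(1-\epsilon)(1/2+\alpha)}$ for all shifts $h$. The reason this enters is that when we try to greedily match $i_1<i_2<\cdots$ in $L$ to $\pi(i_1),\pi(i_2),\dots$ in $R$, a failure mode is that the red neighborhoods we have available on the $R$-side only allow patterns that are "monotone-ish" — precisely, that coincide with a shifted copy $\pi+h$ of $\pi$ on a long exact subpattern. The small-intersection hypothesis says this common exact pattern can have length at most $n^{(1-\epsilon)(1/2+\alpha)} = m^{1-\epsilon+o(1)}$, which is $o(m)$; so after removing the at-most-that-many "bad" positions we still have $\ge m - o(m)$ good positions to complete the embedding. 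I would phrase this as: the positions where the greedy matching is forced into a block that was created as a shifted pattern form an exact pattern common to $\pi$ and some $\pi+h$, hence a set of size at most $n^{(1-\epsilon)(1/2+\alpha)}$, and the remaining positions can be matched freely within fresh red-clique blocks.

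Organizationally I would: (1) set up the greedy process, spelling out the invariant (a sequence of disjoint "blocks" alternating between $L$-side and $R$-side, each either a monochromatic-red clique of size $<s$ still being grown or a completed red clique of size $\ge s$ obtained from a high-blue-degree vertex); (2) show the process either terminates in a red $K_s$ (contradiction) or produces $\ge n^{1/2-\beta-o(1)}$ completed blocks before exhausting $L$ or $R$; (3) argue that the pattern realized across the blocks, if it failed to contain $\pi(M)$ as an exact pattern, would force $\pi(M)$ to share an exact pattern of length $>n^{(1-\epsilon)(1/2+\alpha)}$ with some shift $\pi(M)+h$, contradicting the hypothesis; (4) conclude the embedding of $M$ into $L\cup R$ exists, giving the third alternative. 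I expect step (3) — translating "the greedy blocks cannot accommodate the pattern $\pi$" into the clean statement "$\pi$ shares a long exact pattern with a shift of itself" — to be the main obstacle, since it requires carefully identifying what structural obstruction to an order-preserving bipartite embedding looks like and checking that it is exactly captured by $\Int(\pi,\pi+h)$; the arithmetic in steps (2) and (4) is routine given $\alpha+\beta\le\epsilon/4$.
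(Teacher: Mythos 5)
Your proposal has a genuine gap, and it sits exactly where you flagged it: step (3). In your setup there is no mechanism that produces a \emph{shift} $h$ at all --- you run a single greedy embedding into the whole bipartite graph $[1,n]\cup[n+1,2n]$, organized around red-clique ``blocks,'' and then assert that a failure would force $\pi$ to share a long exact pattern with some $\pi+h$. Nothing in that block structure has any connection to shifting $\pi$ by an integer, and the hypothesis is about the very specific quantity $\Int(\pi,\pi+h)$, not about a generic structural obstruction to an order-preserving bipartite embedding. The block device is also internally inconsistent with your standing assumptions: once you assume there is no blue $K_3$ and no red clique of size $s=n^{1/2+\beta}/4-n^{\epsilon/4}/2$, a vertex of blue degree $\ge s$ is already an immediate contradiction, so the ``completed red clique blocks of size $\ge s$'' you plan to harvest cannot arise inside the proof by contradiction; moreover your counting in step (2) (about $n/s$ blocks of size $s$) never interacts with the exponent $(1-\epsilon)(1/2+\alpha)$ from the hypothesis, so the arithmetic could not close even if step (3) were granted.

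The paper's proof shows where the shift actually comes from: the greedy embedding of $M$ is attempted not once but once for each of the $n^{\epsilon/4}$ shifted column windows $[n+i,\,n+i+n^{1/2+\alpha})$. Each failed attempt $i$ yields at most $n^{1/2+\alpha}$ blue ``segments'' (all vertices strictly between consecutive greedily chosen vertices $v_{j-1}(i),v_j(i)$ are joined in blue to the single column $i+\pi(j)$), covering all but $n^{1/2+\alpha}$ of $[1,n]$, each of length under $n^{1/2+\beta}$ (otherwise a single vertex of blue degree $n^{1/2+\beta}$ already finishes the proof). The hypothesis is used precisely when comparing two attempts $i'<i$: segments of $F(i)$ and $F(i')$ that coincide determine a common exact pattern of $\pi+i$ and $\pi+i'$, hence at most $\Int(\pi,\pi+(i-i'))\le n^{(1-\epsilon)(1/2+\alpha)}$ of them, so attempt $i$ contributes at least $n-n^{1/2+\alpha}-i\,n^{(1-\epsilon)(1/2+\alpha)}n^{1/2+\beta}$ new blue edges. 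Summing over $i\le n^{\epsilon/4}$ gives on the order of $n^{1+\epsilon/4}/2$ blue edges inside $[1,n]\cup[n+1,n+2n^{1/2+\alpha}]$, forcing a vertex of blue degree at least $n^{1/2+\beta}/4-n^{\epsilon/4}/2$ (here $\alpha+\beta\le\epsilon/4$ is what makes the exponents work), i.e.\ a blue triangle or the required red clique. To salvage your approach you would need to replace the single-embedding block argument by something that genuinely generates shifted copies of $\pi$; the windowed multi-attempt counting is the missing idea.
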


\begin{proof}
Fix a bicoloring $C$ of the ordered complete graph on $[2n]$, and suppose that it contains none of the hypothesized blue or red structures. Then in particular, for $1 \leq i \leq n+1-n^{1/2+\alpha}$, we know that there are no red copies of $M$ between $[1,n]$ and $[n+i, n+i+n^{1/2+\alpha})$.

Fix some $i \leq n^{\epsilon/4}$. Let $v_1(i)$ be the first vertex in $[n]$ such that $C(v_1(i), i+\pi(1))$ is red (or $v_1(i) = \infty$ if no such vertex exists). Let $b_2(i)$ be the first vertex in $[n]$ after $v_1(i)$ such that $C(v_2(i), i+\pi(2))$ is red (or, again, $v_2(i) = \infty$ if no such vertex exists). Iteratively define $v_3(i), \dots, v_{n^{1/2+\alpha}}(i)$ in the same way. Also let $f(i)$ be the first index at which $v_{f(i)}(i) = \infty$. By our assumption that $[1,n] \cup [n+i,n+i+n^{1/2+\alpha})$ is red $M$-free, this index exists.

\begin{figure}
\centering
\includegraphics[width=0.5\textwidth]{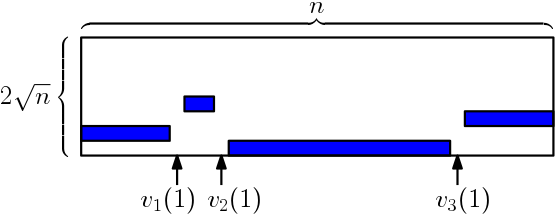}
\caption{One possibility for the set of segments $F(1)$ in the blue adjacency matrix, if $\pi(m) = (2, 4, 1, 3)$.} \label{figure:segments}
\end{figure}

The vertices $v_1(i),\dots,v_{f(i)-1}(i)$ demarcate $f(i)$ blue segments in the adjacency matrix of $[1,n] \cup [n+1,n+2n^{1/2+\alpha}]$. That is, for $1 \leq j \leq f(i)$ we have $C(k, i+\pi(j))$ is blue for all $v_{j-1}(i) < k < v_j(i)$ (where for convenience we set $v_0(i) = 0$ and $v_{f(i)}(i) = n+1$). Treating $C$ as an $n \times 2n^{1/2+\alpha}$ matrix, each segment is in a distinct row, and the segments occupy distinct intervals of columns, covering a total of at least $n - n^{1/2+\alpha}$ columns. If any segment had length at least $n^{1/2+\beta}$, then some vertex would have $n^{1/2+\beta}$ blue edges, so the coloring would contain either a blue triangle or a red $K_{n^{1/2+\beta}}$. So henceforth we assume that every segment has length at most $n^{1/2+\beta}$.

For each $i \leq n^{\epsilon/4}$ let $F(i)$ be the set of $f(i)$ blue segments as defined above (see Figure~\ref{figure:segments} for an example). We seek to lower bound the number of blue edges in $F(i)$ which are not contained in any $F(i')$ for $i' < i$. So fix $i' < i \leq n^{\epsilon/4}$. Suppose that there are $k$ segments in $F(i)$ which intersect with segments in $F(i')$. 

Since each segment in $F(i)$ is in a different row, as is each segment of $F(i')$, each of the $k$ intersecting segments in $F(i)$ intersects with a unique segment in $F(i')$. Suppose that $s_1, s_2 \in F(i)$ and $t_1, t_2 \in F(i')$ where $s_1$ intersects $t_1$ and $s_2$ intersects $t_2$. Then $\text{row}(s_1) = \text{row}(t_1)$, and $\text{row}(s_2) = \text{row}(t_2)$. And since the segments $F(i)$ hit disjoint intervals of columns, as do the segments $F(i')$, we have $\text{columns}(s_1)$ is ``left'' of $\text{columns}(s_2)$ in the adjacency matrix if and only if $\text{columns}(t_1)$ is ``left'' of $\text{columns}(t_2)$. So the $k$ intersecting segments define an exact pattern in both $\pi+i'$, which describes the row indices of the segments $F(i')$, and $\pi+i$, which describes the row indices of the segments $F(i)$. It follows that $k$ is at most $\Int(\pi, \pi+i-i')$, which is by assumption at most $n^{(1-\epsilon)(1/2+\alpha)}$. Summing over all $i' < i$, at most $in^{(1-\epsilon)(1/2+\alpha)}$ segments in $F(i)$ intersect with previous segments.

Every segment has length at most $n^{1/2+\beta}$ by assumption. Thus, for each $i \leq n^{\epsilon/4}$, the blue segments in $F(i)$ contribute at least $$n - n^{1/2+\alpha} - in^{(1-\epsilon)(1/2+\alpha)}n^{1/2+\beta} = n - n^{1/2+\alpha} - in^{1-\epsilon/2+\beta+(1-\epsilon)\alpha}$$ new blue edges. When $i=1$ the contribution is $n - n^{1/2+\alpha}$; when $i = n^{\epsilon/4}$, the contribution is at least $-n^{1/2+\alpha}$. The contributions decrease linearly, so in total there are at least $n^{1 + \epsilon/4}/2 - n^{1/2 + \alpha + \epsilon/4}$ blue edges in the bipartite graph $[1,n] \cup [n+1,n+2n^{\alpha+1/2}]$. So some vertex has blue degree at least $n^{1/2 + \beta}/4 - n^{\epsilon/4}/2$, implying that there is either a blue triangle or a red clique of size $n^{1/2 + \beta}/4 - n^{\epsilon/4}/2$.
\end{proof}

We seek to show that for random permutations $\pi$ and for any integer $h$, the intersection of $\pi$ with the shifted permutation $\pi + h$ is sublinear in the length of $\pi$ with high probability. The general outline of the proof is as follows. We bound the expected number of long exact patterns contained in both $\pi$ and $\pi+h$. To do so, we of course sum over all long exact patterns, splitting into two cases. If the exact pattern $\rho$ has small intersection with $\rho+h$, we can straightforwardly obtain a good bound on the probability that $\rho$ embeds into both permutations. However, if $\rho$ has large intersection with $\rho+h$, we cannot do so. Instead we show that the number of such exact patterns is extremely small.

The following lemma formalizes the last step of the above outline.

\begin{lemma}\label{lemma:largeinter}
Fix positive integers $n$, $k \leq n$, and $h$. Pick an exact pattern $\rho$ of length $k$ from $[n]$ uniformly at random. Then the probability that the set intersection $\rho \cap (\rho + h)$ has size at least $t$, and there exists some permutation $\pi \in S_n$ such that $\rho$ and $\rho+h$ are both exact patterns in $\pi$, does not exceed $$\frac{2^{2k-t}k^{k-t}}{k!}.$$ 
\end{lemma}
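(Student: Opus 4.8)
The plan is to think of an exact pattern $\rho$ of length $k$ from $[n]$ as an increasing sequence of $k$ distinct values $\rho(1) < \cdots$? No — $\rho$ is an \emph{ordered} subset, so it is an injection from $[k]$ to $[n]$, equivalently a choice of $k$-element subset $S \subseteq [n]$ together with a linear order on $S$; sampling uniformly at random means all $k! \binom{n}{k}$ such objects are equally likely. The key structural observation to exploit is: if both $\rho$ and $\rho+h$ are exact patterns inside a common permutation $\pi$, then on the overlap $S \cap (S+h)$ the two patterns must be ``order-compatible'' in a rigid way, because a single permutation cannot realize two conflicting relative orders on the same set of values. Concretely, writing $T = \{x : x \in S \text{ and } x - h \in S\}$ (the part of $\rho$'s value set that also appears shifted down by $h$, so $|T| = |S \cap (S+h)| \ge t$), the positions at which the values of $T$ occur in $\rho$ and the positions at which the values of $T - h$ occur in $\rho$ are both determined by $\pi$; since $\pi$ is a bijection, the relative order of $T$ as it sits inside $\rho$ and the relative order of $T$ (shifted) as it sits inside $\rho+h$ are linked, and this forces a recursive/chained constraint on $\rho$ restricted to an arithmetic-progression-like structure.

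The concrete approach I would take: condition on the underlying value set $S$ (equivalently on which pairs $(x, x+h)$ both lie in $S$), and then count the number of linear orderings of $S$ that can possibly arise as such a ``doubly-embeddable'' $\rho$, dividing by $k!$ (the total number of orderings of $S$) and then summing/maximizing over $S$. Fix $S$ and let the overlap structure decompose the elements of $S$ into maximal chains under the ``shift by $h$'' relation: elements $x, x+h, x+2h, \dots$ that all lie in $S$. Say there are chains of lengths $\ell_1, \ell_2, \dots$; then $\sum (\ell_j) = k$ and $\sum (\ell_j - 1) = |S \cap (S+h)| \ge t$, so the number of chains is $k - |S\cap(S+h)| \le k - t$. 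The crucial claim is that once we fix the relative order of the $\le k-t$ \emph{chain-heads} inside $\rho$ (there are at most $(k-t)!$ ways, in fact at most $k^{k-t}$ crudely), the rest of the ordering is heavily constrained by the requirement that $\rho$ and $\rho + h$ both embed in one $\pi$: walking along each chain, the order-compatibility condition propagates, leaving only a bounded number — I expect at most $2$ per chain element, hence a factor $2^{k}$ or so — of choices. Multiplying: at most (choices of $S$, absorbed) $\times\, k^{k-t} \times 2^{2k-t}$ valid orderings, over $k!$ total, matches the target bound $2^{2k-t}k^{k-t}/k!$.

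The main obstacle, and the step I would spend the most care on, is making precise and proving the propagation claim: that the order-compatibility forced by a single ambient permutation $\pi$ really does cut the number of admissible orderings of each shift-chain down to a constant-per-element factor rather than something growing. The heart of it is an argument like this: suppose $x, x+h \in S$ both appear in $\rho$, at positions $p$ and $q$ respectively. Since $\rho$ embeds in $\pi$, and $\rho + h$ also embeds in $\pi$ (at some positions), the value $x+h$ plays a ``double role'' — it is the shifted image of $x$ under the $\rho$-to-$(\rho+h)$ correspondence, \emph{and} an honest entry of $\rho$. Tracking these double roles along a chain, one gets that the position of $x+jh$ in $\rho$ is determined, up to a bounded branching, by the position of $x+(j-1)h$ together with the global data of where the chain-heads sit; I would formalize this by setting up, for a fixed ambient $\pi$, the partial matching between the $S$-positions and the $(S+h)$-positions and showing it is ``interval-preserving'' on each chain in the sense already used in the proof of Theorem~\ref{theorem:cliquematching}. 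Secondary, more routine obstacles: being careful that ``exact pattern in $\pi$'' quantifies existentially over $\pi \in S_n$ (so we never actually fix $\pi$ — we only use that \emph{some} $\pi$ works, which is what couples the two orderings), and checking that the crude bounds $(k-t)! \le k^{k-t}$ and the per-element factor of $4$ (giving $2^{2(k-t)} \cdot 2^{?}$, to be reconciled with the stated $2^{2k-t}$) are taken with enough slack that the stated inequality holds; since the paper explicitly disclaims optimizing constants, I would aim for the cleanest chain-counting argument that lands at or below $2^{2k-t}k^{k-t}/k!$ rather than the tightest one.
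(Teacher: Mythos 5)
Your setup matches the paper's: sample $\rho$ by first choosing the value set $S$ and then a uniform ordering, note that $S$ splits into at most $k-t$ maximal shift-by-$h$ chains (arithmetic progressions), observe that a common ambient $\pi$ forces the order of $\rho$ and $\rho+h$ to agree on the overlap (so each chain is monotone in $\rho$, and more generally non-head elements inherit their relative order from their $-h$ predecessors), and then bound the number of ``compatible'' orderings against $k!$. The gap is exactly the step you flag yourself: the ``propagation claim'' that fixing the relative order of the chain-heads plus a bounded per-element branching determines the ordering is never proved, and it is the entire content of the lemma. The natural way to prove such a determination statement --- take two compatible orderings with the same data and look at the first position where they differ, then push the disagreement down by $h$ into the common prefix --- breaks when the two differing entries lie in chains of \emph{opposite} monotonicity: for a decreasing chain the $-h$ predecessor sits \emph{later} in $\rho$, not in the agreed-upon prefix, so no contradiction is obtained. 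The paper avoids this by spending an extra factor $2^k$ to enumerate the set $L_{\mathrm{inc}}$ of positions occupied by increasing progressions, after which the first-difference argument runs separately inside the increasing part and (symmetrically) the decreasing part, with the heads' \emph{positions} (not just their relative order) as the remaining $k^{k-t}$-sized datum. Your encoding (head relative order plus ``$2$ choices per element'') contains no analogue of this decoupling, and you never specify what the binary choice per element is or why the resulting encoding is injective on compatible orderings; as written, the count is an expectation, not a proof.

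There is also an arithmetic problem with your fallback of ``a per-element factor of $4$'': that yields $4^k = 2^{2k}$, which exceeds the available budget $2^{2k-t}$ by a factor $2^t$, and the $k^{k-t}$ term cannot absorb it (for a single long chain, $t = k-1$ and $k^{k-t} = k$, while $2^t$ is exponential in $k$). So the slack you hope to lean on is not actually there; you need the sharper accounting of the paper --- $2^{k-t}$ for chain directions, $2^{k}$ for the position set of the increasing chains, and $k^{k-t}$ for the head positions, followed by a genuine injectivity proof --- or some equally precise substitute for the propagation claim.
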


\begin{proof}
Observe that it is possible to pick an exact pattern uniformly at random by two independent choices: first, pick an unordered subset of $[n]$ with size $k$. Second, pick some ordering for the subset. We will show that for any unordered subset $U  \subseteq [n]$ with size $k$ such that $|U \cap (U + h)| \geq t$, if we pick an ordering on $U$ uniformly at random and thereby induce an exact pattern $\rho$, then the probability that there exists a permutation $\pi$ in which $\rho$ and $\rho+h$ are both exact patterns does not exceed $2^{2k-t}k^{k-t}/k!$. This will prove the lemma.

Fix any $U \subseteq [n]$ with $|U| = k$ and $|U \cap (U+h)| \geq t$. The number of elements $a \in U$ such that $a-h \not \in U$ does not exceed $k - t$, so $U$ can be partitioned into $k - t$ arithmetic progressions, each with common difference $h$.

Pick some permutation $\sigma \in S_k$. This yields an ordering of $U$, in which the smallest element of $U$ is placed in position $\sigma(1)$, and so forth. Hence, an exact pattern $\rho$ is induced. Suppose that the ordering is ``compatible'': that is, $\rho$ and $\rho+h$ are both exact patterns in some permutation $\pi$. Since $\rho$ and $\rho+h$ fix the order in $\pi$ of the sets of elements $U$ and $U+h$ respectively, it must hold that $U \cap (U+h)$ has the same order in $\rho$ and $\rho+h$. Pick any arithmetic progression $\{a + ih\}_{i=0}^m \subseteq U$. We have that $a+ih$ precedes $a+(i+1)h$ in $\rho$ if and only if $a+ih$ precedes $a+(i+1)h$ in $\rho+h$, or equivalently $a+(i-1)h$ precedes $a+ih$ in $\rho$. So the arithmetic progression must either have a monotone increasing order or a monotone decreasing order in $\rho$.

The key observation was that for any $a,b \in U$ where neither $a$ nor $b$ is the first term in its arithmetic progression, $a$ precedes $b$ in $\rho$ if and only if $a-h$ precedes $b-h$. We use this observation to bound the total number of compatible orderings. There are $2^{k-t}$ ways to assign a direction to each progression, either monotone increasing or monotone decreasing. Fix one such assignment, and suppose that $m_\text{inc}$ progressions are monotone increasing. There are at most $2^k$ ways to pick the subset of locations $L_\text{inc} \subseteq [k]$ to which the increasing-ordered progressions are assigned. It remains to pick an embedding of the increasing-ordered progressions in $L_\text{inc}$, and an embedding of the decreasing-ordered progressions in $[k] \setminus L_\text{inc}$. The two cases are symmetric, so we consider the increasing-ordered progressions. 

For notational convenience, arbitrarily index the increasing-ordered progressions $A_1,\dots,A_{m_\text{inc}}$. Now define a map $\Phi: S_{|L_\text{inc}|} \to L_\text{inc}^{m_\text{inc}}$ from embeddings of the increasing-ordered progressions into $L_\text{inc}$ (which are in bijection with the permutations $S_{|L_\text{inc}|}$) to tuples $(v_1,\dots,v_{m_\text{inc}})$, where $v_i$ is the index assigned to the first element of progression $A_i$. 

We claim that the restriction of $\Phi$ to compatible embeddings is injective. Pick two different compatible orderings of $U$, inducing exact patterns $\rho_1$ and $\rho_2$, and assume for the sake of contradiction that $\Phi(\rho_1) = \Phi(\rho_2)$. Suppose that $j$ is the first index at which $\rho_1$ and $\rho_2$ differ. By assumption, the first term of each arithmetic progression has the same index in $\rho_1$ and $\rho_2$. Therefore neither $\rho_1(j)$ nor $\rho_2(j)$ is a first term in its progression. Now observe that $\rho_1(j)$ precedes $\rho_2(j)$ in $\rho_1$, but $\rho_2(j)$ precedes $\rho_1(j)$ in $\rho_2$. Hence, $\rho_1(j) - h$ precedes $\rho_2(j) - h$ in $\rho_1$, and in $\rho_2$ the opposite holds. However, $\rho_1(j)-h$ and $\rho_2(j)-h$ are both in the first $j-1$ terms of $\rho_1$, which are equal to the first $j-1$ terms of $\rho_2$. So one of the relative orderings is impossible! Contradiction, so the restriction of $\Phi$ is injective.

Thus, there are at most $|L_\text{inc}|^{m_\text{inc}}$ ways to compatibly embed the increasing-ordered progressions into $L_\text{inc}$, and similarly there are at most $(k - |L_\text{inc}|)^{k-t-m_\text{inc}}$ ways to embed the decreasing-ordered progressions into $[k] \setminus L_\text{inc}$. So the total number of compatible orderings is at most $2^{k-t} 2^k k^{k-t}$. Since the total number of orderings is $k!$, the result follows.
\end{proof}

Now we can prove our desired result on random permutations.

\begin{lemma}\label{lemma:ordint}
Fix some $\alpha > 0$ and some positive integers $n$ and $h$. If $\pi \in S_n$ is a permutation chosen uniformly at random, then $$\Pr\left[\Int(\pi, \pi+h) \geq n^{2/3 + \alpha}\right] \leq \left(e^5 n^{-3\alpha/2}\right)^{n^{2/3+\alpha}}.$$
\end{lemma}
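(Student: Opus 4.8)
The plan is a first-moment argument: I will bound the expected number of length-$k$ exact patterns common to $\pi$ and $\pi+h$, where $k=\lceil n^{2/3+\alpha}\rceil$, and then apply Markov's inequality. If $n^{2/3+\alpha}>n$ (no exact pattern is that long, e.g.\ when $\alpha>1/3$ and $n\ge 2$, or when $n=1$) or if $e^5 n^{-3\alpha/2}\ge 1$ (the claimed bound is at least $1$), the statement is trivial, so assume neither holds; in particular $k\le n$. Since $\Int(\pi,\pi+h)$ is an integer, $\Int(\pi,\pi+h)\ge n^{2/3+\alpha}$ holds exactly when $\pi$ and $\pi+h$ share an exact pattern of length $k$. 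After shifting coordinates (a common exact pattern $\tau$ of $\pi$ and $\pi+h$ corresponds to $\rho=\tau-h$), such a pattern is the same as a length-$k$ sequence $\rho$ of distinct elements of $[n]$ for which \emph{both} $\rho$ and $\rho+h$ are exact patterns of $\pi$. Letting $N$ count these sequences, $\Pr[\Int(\pi,\pi+h)\ge n^{2/3+\alpha}]=\Pr[N\ge 1]\le\mathbb E[N]=\sum_\rho\Pr[\rho\text{ and }\rho+h\text{ are both exact patterns of }\pi]$, the sum ranging over the at most $n^k$ exact patterns $\rho$ of length $k$.

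Next I would bound each summand in terms of $t:=|\rho\cap(\rho+h)|$, the size of the overlap of the two value sets. The event ``$\rho$ and $\rho+h$ are both exact patterns of $\pi$'' depends only on the relative order in $\pi$ of the $2k-t$ values lying in $\rho\cup(\rho+h)$, which for uniform $\pi$ is a uniformly random total order; the event holds precisely when that order is a common linear extension of the two length-$k$ chains prescribed by $\rho$ and by $\rho+h$. If those chains disagree on the $t$ shared values there is no such extension; otherwise, counting the ways to interleave the non-shared elements within the $t+1$ blocks cut out by the shared ones bounds the number of extensions by $\prod_i\binom{a_i+b_i}{a_i}\le 4^{k-t}$, where $\sum_i a_i=\sum_i b_i=k-t$. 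Hence $\Pr[\rho\text{ and }\rho+h\text{ both exact patterns of }\pi]\le 4^{k-t}/(2k-t)!$, and in particular only \emph{compatible} $\rho$ — those for which some permutation contains both $\rho$ and $\rho+h$ — contribute to $\mathbb E[N]$.

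I would then group the sum by the value of $t$. Applying Lemma~\ref{lemma:largeinter} (scaled up by the total count $\le n^k$ of exact patterns of length $k$), the number of compatible $\rho$ with $|\rho\cap(\rho+h)|\ge t$ is at most $n^k\cdot 2^{2k-t}k^{k-t}/k!$, so
\[
\mathbb E[N]\ \le\ \sum_{t=0}^{k}\frac{n^k\,2^{2k-t}k^{k-t}}{k!}\cdot\frac{4^{k-t}}{(2k-t)!}.
\]
Using the elementary bounds $(2k-t)!\ge k^{k-t}\,k!$ and $\sum_{t\ge 0}2^{-3t}<2$, the right-hand side is at most $2\cdot 16^k n^k/(k!)^2$, and two applications of $k!\ge (k/e)^k$ give $\mathbb E[N]\le 2\,(16e^2\,n/k^2)^k$. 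Since $k\ge n^{2/3+\alpha}$ we have $n/k^2\le n^{-1/3-2\alpha}$; since $16e^2<e^5$, since $-1/3-2\alpha\le -3\alpha/2$, and since the resulting base is less than $1$ (so the factor $2$ is absorbed for $k\ge 4$, which holds in every non-trivial case, and enlarging the exponent only helps), this is at most $(e^5 n^{-3\alpha/2})^{n^{2/3+\alpha}}$, which is the claim.

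I expect the per-pattern probability bound to be the delicate step: one cannot treat ``$\rho$ embeds'' and ``$\rho+h$ embeds'' as independent, since the value sets overlap by an amount $t$ that varies from pattern to pattern, and the precise number of linear extensions likewise depends on the fine structure of $\rho$ and $h$. Splitting on $t$ is what rescues the argument: for small $t$ the two chains are nearly disjoint and the bound $4^{k-t}/(2k-t)!$ already beats the crude count $n^k$ of patterns, whereas for large $t$ that bound degrades but Lemma~\ref{lemma:largeinter} guarantees there are extremely few such patterns, and the two regimes balance out with the $t=0$ term dominating the final sum.
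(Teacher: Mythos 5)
Your proposal is correct and follows essentially the same route as the paper: a first-moment bound over length-$k$ exact patterns combined with Markov, using Lemma~\ref{lemma:largeinter} to control the number of compatible patterns with large overlap $t$. The only differences are technical bookkeeping: you bound the per-pattern probability by counting common linear extensions, giving $4^{k-t}/(2k-t)!$, where the paper uses independence of the three disjoint subpatterns to get $1/\bigl(t!\,((k-t)!)^2\bigr)$, and you sum uniformly over all $t$ rather than splitting into the two regimes $t\le k/2$ and $t>k/2$.
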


\begin{proof}
We proceed by bounding the expected value of $\Int(\pi,\pi+h)$. Let $k = n^{3/4}$. Pick any exact pattern $\rho$ of size $k$ in $[n]$. Then $\rho$ is contained in both $\pi$ and $\pi+h$, for any permutation $\pi \in S_n$, if and only if $\rho$ and $\rho-h$ are both contained in $\pi$. If the smallest element of $\rho$ is less than $h+1$, then $\rho-h$ cannot be contained in any permutation, so assume the contrary. 

The probability that $\rho$ and $\rho-h$ are both exact patterns in a random permutation $\pi \in S_n$ is at most the probability that $\rho \cap (\rho - h)$ and $\rho \setminus (\rho - h)$ and $(\rho-h) \setminus \rho$ are all exact patterns in $\pi$. Here, the intersection/difference of two exact patterns is taken to be the set-theoretic intersection/difference, ordered according to whichever exact pattern contains the set (and picking either pattern if both contain the set). But these three exact patterns are disjoint, so the corresponding events are independent. Suppose that $m(\rho) = |\rho \cap (\rho-h)|$. Since an exact pattern of length $r$ is contained in a random permutation with probability $1/r!$, we have that $\rho$ and $\rho-h$ are contained in a random $\pi \in S_n$ with probability at most $$\frac{1}{m(\rho)!} \cdot \frac{1}{(k-m(\rho))!^2}.$$

Observe that as a function of $m$, the above fraction is largest when $m(\rho) \approx k - \sqrt{k}$, and is increasing on $[1, k-\sqrt{k}]$ and decreasing on $[k-\sqrt{k}, k]$. Hence, the bound is strong for $m(\rho)$ small. Summing over all exact patterns $\rho$ with $m(\rho) \leq k/2$, and using the trivial bound that the number of exact patterns is $n!/(n-k)!$, we have that
$$\mathbb{E}\left[\text{\# contained patterns $\rho$ with $m(\rho) \leq k/2$} \right] \leq \frac{n!}{(n-k)!(k/2)!^3}.$$ The expectation is taken over permutations $\pi \in S_n$, and a ``contained pattern'' is an exact pattern $\rho$ such that $\rho$ and $\rho-h$ are contained in $\pi$.

To bound the expectation for patterns $\rho$ with $m(\rho) > k/2$, we first discard the patterns $\rho$ for which there is no permutation $\pi$ containing both $\rho$ and $\rho-h$. Now Lemma~\ref{lemma:largeinter} gives that the number of remaining patterns is only $$\frac{2^k k^{k/2}}{k!} \frac{n!}{(n-k)!} = \binom{n}{k} 2^k k^{k/2}.$$ Using this result and assuming the worst case that $m(\rho) = k-\sqrt{k}$, we get
$$\mathbb{E}\left[\text{\# contained patterns $\rho$ with $m(\rho) > k/2$} \right] \leq \binom{n}{k} \frac{2^kk^{k/2}}{(k-\sqrt{k})!(\sqrt{k})!^2}.$$
Putting everything together, simplifying, and substituting $k = n^{2/3+\alpha}$,
\begin{align*}
\mathbb{E}\left[\text{\# contained patterns}\right]
&\leq \frac{n!}{(n-k)!(k/2)!^3} + \binom{n}{k} \frac{2^kk^{k/2}}{(k-\sqrt{k})!} \\
&\leq \frac{n^k(2e)^{3k/2}}{k^{3k/2}} + \frac{n^k2^ke^{2k}}{k^{k/2}(k-\sqrt{k})^{k-\sqrt{k}}k^{\sqrt{k}}} \\
&\leq \frac{n^k(2e)^{3k/2}}{k^{3k/2}} + \frac{n^k2^{k+2\sqrt{k}}e^{2k}}{k^{3k/2}} \\
&\leq \left (e^5 n^{-3\alpha/2}\right)^{n^{2/3+\alpha}}. \qedhere
\end{align*}
\end{proof}

The above lemma and Theorem~\ref{theorem:cliquematching} imply the main result of this section---a subquadratic bound on $r_<(M, K_3)$ for random matchings with interval chromatic number $2$---as a corollary.

\begin{theorem}\label{thm:rmatchings}
Let $M$ be an ordered matching on $2m$ vertices with interval chromatic number $2$, picked uniformly at random. Then there is a constant $c$ such that $$r_<(M, K_3) \leq cm^{24/13}$$ with high probability.
\end{theorem}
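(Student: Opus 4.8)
The plan is to obtain Theorem~\ref{thm:rmatchings} by combining Theorem~\ref{theorem:cliquematching} with Lemma~\ref{lemma:ordint}; essentially all the work is in choosing the parameters $\alpha,\beta,\epsilon$ (and the exponent parameter in Lemma~\ref{lemma:ordint}) just right. Given the random matching $M$ on $2m$ vertices with $\chi_<(M)=2$ and corresponding permutation $\pi=\pi(M)\in S_m$, I would first pick an integer $n$ of size $\Theta(m^{24/13})$ and then \emph{define} $\alpha:=\log m/\log n-\tfrac12$, so that automatically $n^{1/2+\alpha}=m$ and $M$ is a matching on exactly $2n^{1/2+\alpha}$ vertices as Theorem~\ref{theorem:cliquematching} demands; this is a clean way to avoid any rounding issues. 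The intention is to make $\alpha$ just below $1/24$: I would introduce slack parameters of order $\Theta(1/\log m)$, namely a parameter $\alpha'=\Theta(1/\log m)$ to feed to Lemma~\ref{lemma:ordint}, then set $\epsilon:=1/3-\alpha'$ and $\beta:=\alpha+\Theta(1/\log m)$, with $\alpha$ pinned down to $1/24-\Theta(1/\log m)$ by the requirement $\alpha+\beta\le\epsilon/4$. One checks that $\alpha,\beta>0$, $\epsilon\in(0,1)$, and $\alpha+\beta\le\epsilon/4$ all hold for suitable choices of the hidden constants.

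The next step is to verify the hypothesis $\Int(\pi,\pi+h)\le n^{(1-\epsilon)(1/2+\alpha)}$ of Theorem~\ref{theorem:cliquematching} for all $h\in[m]$, with high probability over $M$. Because $n^{1/2+\alpha}=m$, we have $n^{(1-\epsilon)(1/2+\alpha)}=m^{1-\epsilon}=m^{2/3+\alpha'}$, so applying Lemma~\ref{lemma:ordint} to $\pi\in S_m$ with its parameter set to $\alpha'$ bounds $\Pr[\Int(\pi,\pi+h)\ge m^{2/3+\alpha'}]$ by $(e^5 m^{-3\alpha'/2})^{m^{2/3+\alpha'}}$. Choosing the constant in $\alpha'=\Theta(1/\log m)$ large enough forces $e^5 m^{-3\alpha'/2}$ to equal a fixed constant strictly below $1$, so this probability is super-exponentially small; a union bound over the $m$ choices of $h$ then shows that with probability $1-o(1)$ the permutation $\pi(M)$ satisfies the full hypothesis of Theorem~\ref{theorem:cliquematching}.

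On this high-probability event, consider any red/blue coloring of $K_{2n}$ with no blue triangle. Theorem~\ref{theorem:cliquematching} gives either a red copy of $M$ inside the bipartite part $[1,n]\cup[n+1,2n]$ --- in which case we are done --- or a red clique on $n^{1/2+\beta}/4-n^{\epsilon/4}/2$ vertices. Here I would check that since $\beta-\alpha=\Theta(1/\log m)$ was chosen precisely so that $n^{\beta-\alpha}$ exceeds a fixed constant (say $16$), and since $n^{\epsilon/4}=o(n^{1/2+\alpha})=o(m)$, this clique has at least $2m=|V(M)|$ vertices and hence contains a red copy of $M$. Either way a blue-triangle-free coloring of $K_{2n}$ contains a red $M$, so $r_<(M,K_3)\le 2n$. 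Finally $2n=2m^{1/(1/2+\alpha)}=2m^{24/(13-24\mu)}$ with $\mu:=1/24-\alpha=\Theta(1/\log m)$, and $m^{24/(13-24\mu)-24/13}=m^{\Theta(\mu)}=m^{\Theta(1/\log m)}=\Theta(1)$, so $r_<(M,K_3)\le cm^{24/13}$ for an absolute constant $c$.

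The one conceptual point --- and the reason the exponent comes out to exactly $24/13$ rather than something strictly larger --- is that none of $\alpha,\beta,\epsilon$ can be taken to be an absolute constant. The constraints $\beta>\alpha$ (so the guaranteed red clique can absorb all of $M$), $\alpha+\beta\le\epsilon/4$, and $\epsilon<1/3$ (so that Lemma~\ref{lemma:ordint} is not vacuous, i.e.\ its exponent parameter is positive) together force $\alpha<1/24$ strictly, and any fixed $\alpha<1/24$ yields only a bound of the form $m^{24/13+\Omega(1)}$, which is weaker. The trick is that every one of these slacks can be shrunk to $\Theta(1/\log m)$ at a cost of only a multiplicative factor $m^{\Theta(1/\log m)}=\Theta(1)$; the remaining care is in checking that $\Theta(1/\log m)$ is still large enough to make Lemma~\ref{lemma:ordint}'s bound nontrivial (this is where the harmless-looking $e^5$ matters) and to make the clique outcome usable, and that all the $\Theta(1/\log m)$ terms can be balanced simultaneously.
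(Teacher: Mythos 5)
Your proposal is correct and follows essentially the same route as the paper: both apply Lemma~\ref{lemma:ordint} with an exponent slack of order $\Theta(1/\log m)$, take a union bound over the shifts $h\in[m]$, and then invoke Theorem~\ref{theorem:cliquematching} with $\epsilon\approx 1/3$, $\alpha\approx\beta\approx 1/24$ and $n=\Theta(m^{24/13})$, absorbing all the $m^{\Theta(1/\log m)}$ losses into the constant $c$. The only differences are bookkeeping choices (the paper sets $\alpha=\beta=1/24-1/(2\log m)$ and pushes the size requirements into the choice of $c$, whereas you normalize $n^{1/2+\alpha}=m$ exactly and take $\beta$ slightly above $\alpha$), which do not constitute a different argument.
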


\begin{proof}
Setting $\delta = 4/\log m$ the statement of Lemma~\ref{lemma:ordint} becomes $$\Pr_{\pi \in S_m} \left[\Int(\pi, \pi+h) \geq e^4 m^{2/3} \right] \leq e^{-e^4 m^{2/3}}.$$ Picking a matching $M$ on $2m$ vertices with interval chromatic number $2$ uniformly at random, we have $\Int(\pi(M), \pi(M)+h) \leq m^{2/3 + 4/\log m}$ for all $h \in [m]$ with high probability. Thus we can apply Theorem~\ref{theorem:cliquematching} with parameters $\epsilon = 1/3 - 4/\log m$ and $\alpha = \beta = 1/24 - 1/(2 \log m)$ and $n = cm^{24/13}$, where $c$ is chosen sufficiently large that $$n^{13/24 - 1/(2 \log m)}/4 - n^{1/12 - 1/\log m}/2 \geq 2m$$ and $$2n^{13/24 - 1/(2 \log m)} \geq 2m.$$ So with high probability, every bicoloring of $[2n]$ contains either a blue triangle or a red copy of $M$ or a red clique of size at least $2m$.
\end{proof}


\section{Future Work}\label{section:conclusion}

Many open questions about the ordered Ramsey numbers of matchings remain. Most significant, perhaps, is the original question posed by Conlon, Fox, Lee, and Sudakov: does there exist some $\epsilon > 0$ such that $r_<(M, K_3) \leq n^{2-\epsilon}$ for every ordered matching $M$ on $n$ vertices? Based on our Theorem \ref{thm:rmatchings}, a number of natural intermediate questions arise. In particular, a reasonably modest step beyond random matchings with $\chi_<(M) = 2$ would be the following:

\begin{conjecture}
For every  $\chi$, there is a constant $\epsilon(\chi) > 0$ such that
$$r_<(M, K_3) \leq O(n^{2-\epsilon(\chi)})$$
for almost every ordered matching $M$ on $n$ vertices with interval chromatic number $\chi_<(M) = \chi$.
\end{conjecture}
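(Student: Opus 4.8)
The plan is to lift the two ingredients behind Theorem~\ref{thm:rmatchings} from interval chromatic number $2$ to general $\chi$: the deterministic clique/matching tradeoff of Theorem~\ref{theorem:cliquematching}, and the ``spread'' estimate for random inputs supplied by Lemmas~\ref{lemma:largeinter} and~\ref{lemma:ordint}. Fix $\chi$ and let $M$ be a matching on $n$ vertices with $\chi_<(M)=\chi$ and vertex blocks $V_1<\dots<V_\chi$. Given a red/blue coloring of $[N]$, split $[N]$ into $\chi$ consecutive host-blocks $B_1,\dots,B_\chi$ of size $N/\chi$ and try to build a \emph{block-aligned} red copy of $M$, i.e.\ one sending each $V_j$ into $B_j$. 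Because $M$ is a matching, every vertex carries exactly one constraint, so a block-aligned copy can be produced by a greedy left-to-right scan with $\chi$ pointers, one per host-block: process the vertices of $M$ in their natural order, and when we reach the later endpoint of an edge, advance the pointer of its host-block to the next position red-joined to the already-placed earlier endpoint, and place it there. A scan that drives some pointer off the end --- for a given choice of starting offsets $h=(h_1,\dots,h_\chi)$ --- certifies, exactly as in the proof of Theorem~\ref{theorem:cliquematching}, a family of blue ``segments'' in the bipartite pictures between host-blocks: distinct rows, pairwise-disjoint column intervals, covering nearly all of $B_1$.

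The first step is to identify the right invariant replacing $\Int(\pi,\pi+h)$. In the multipartite setting the role of $\pi$ is played, for each ordered block-pair $(a,b)$, by the partial injection $\pi_{ab}$ recording which vertex of $V_a$ is matched to which vertex of $V_b$. I would define a suitable ``ordered self-intersection under shifts'' for the whole tuple $(\pi_{ab})_{a<b}$ and prove the deterministic statement: if every such quantity is $O(n^{1-\eta})$ for each shift, then any blue-$K_3$-free coloring of $[N]$ contains a block-aligned red copy of $M$, unless running $\Theta((N/\chi)^{c})$ choices of offsets and summing the blue edges freshly contributed by each forces a vertex of blue degree $\omega(\sqrt{N\log N})$ --- and hence, by blue-$K_3$-freeness, a red clique of size exceeding $n$, which contains $M$. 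This is the $\chi=2$ accounting of Theorem~\ref{theorem:cliquematching} carried out with the $\binom{\chi}{2}$ block-pairs and the $\chi$ pointers bookkept, and it yields $r_<(M,K_3)=O(n^{2-\epsilon(\chi)})$ for spread $M$, with $\epsilon(\chi)>0$ for each fixed $\chi$ (possibly degrading with $\chi$, which is all the conjecture requires).

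The second step is to show that a uniformly random $M$ with $\chi_<(M)=\chi$ is spread with high probability. After conditioning on the block sizes (which are balanced to within lower-order terms with high probability), each $\pi_{ab}$ is a uniform random partial injection, and the heart of Lemma~\ref{lemma:largeinter} --- partitioning a set with large self-overlap under a shift into few arithmetic progressions of common difference $h$ and counting ``compatible'' orderings via the injective map $\Phi$ --- goes through essentially verbatim for partial permutations. This gives a Lemma~\ref{lemma:ordint}-type exponential tail bound on each ordered self-intersection; a union bound over the $O(\chi^2)$ block-pairs, the $O(n)$ shifts, and the $O(n^{\chi})$ choices of block boundaries costs only a polynomial factor, which the exponential saving absorbs. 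Combining with the deterministic step and optimizing the exponents exactly as in the proof of Theorem~\ref{thm:rmatchings} gives the claimed bound.

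The main obstacle is the deterministic multipartite tradeoff. When edges of $M$ span non-consecutive blocks there is no single ``scanned block / pinned block'' split as in Theorem~\ref{theorem:cliquematching}; one must run all $\chi$ pointers simultaneously and still extract, from a failed scan, blue-segment families that (i) retain the ``distinct rows, disjoint column intervals'' structure in each of the $\binom{\chi}{2}$ bipartite pictures, and (ii) have pairwise overlap across different offset choices controlled by the self-intersection invariant, so that the fresh-blue-edge count beats the red-clique threshold by a factor polynomial in $N$. I expect this to go through but to require care in choosing the offsets and in the double counting; the random-input estimate and the parameter optimization should then be routine given the $\chi=2$ case. One should also fix the precise model of ``uniform matching with $\chi_<(M)=\chi$'' and check that almost all matchings with $\chi_<(M)\le\chi$ have $\chi_<(M)=\chi$ exactly, but these are minor points.
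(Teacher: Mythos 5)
This statement is one of the paper's open conjectures (Section~\ref{section:conclusion}); the paper offers no proof of it, so your proposal must stand on its own, and as written it does not: it is a research plan whose central lemma is left unproved. The gap is precisely the one you flag yourself, the multipartite analogue of Theorem~\ref{theorem:cliquematching}, and it is not a matter of ``care in the double counting'' but of a missing idea. In the $\chi=2$ argument the host structure is asymmetric: all of $[1,n]$ is fixed and only the other side is windowed by the offset $i$, so the blue segments produced by a failed scan sit in rows $i+\pi(j)$ that are determined by the pattern and the offset alone, independently of the coloring. That coloring-independence is what makes the overlap of segment families $F(i)$ and $F(i')$ a purely combinatorial event, bounded by $\Int(\pi,\pi+i-i')$, which in turn is what Lemma~\ref{lemma:largeinter} and Lemma~\ref{lemma:ordint} can control for a random matching. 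Once $\chi\geq 3$, an edge of $M$ may have both endpoints in blocks that are being scanned adaptively, so the row in which a blue segment appears depends on where the earlier endpoint was placed, i.e.\ on the coloring and on the history of the other pointers. The ``distinct rows, disjoint column intervals'' structure, and above all a coloring-independent invariant whose tail you could estimate for random $M$, are exactly what you would need to define and prove; saying ``I would define a suitable ordered self-intersection for the tuple $(\pi_{ab})$'' and ``I expect this to go through'' leaves the entire content of the conjecture unestablished.

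There are also secondary gaps in the probabilistic step: for a uniformly random matching with $\chi_<(M)=\chi$ the block sizes and the bipartite patterns $\pi_{ab}$ are correlated (and some block pairs may carry very few edges), so the claim that each $\pi_{ab}$ is a uniform partial injection to which Lemma~\ref{lemma:largeinter} applies ``essentially verbatim'' needs an actual argument, as does the union bound over block-boundary choices, since the spread condition must hold simultaneously for the conditioned structure rather than for independently resampled injections. None of these are obviously fatal, but combined with the missing deterministic tradeoff they mean the proposal is a plausible programme toward the conjecture, not a proof of it.
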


Conversely, we are curious how far from the truth the exponent $\frac{24}{13}$ in our Theorem \ref{thm:rmatchings} is. It seems plausible that our argument can be optimized to produce a significantly better bound, and we do not know of any lower bounds for this class of matchings that come anywhere near this bound.

Regarding parenthesis matchings, we were unable to find a family for which $r_<(M, K_3)$ is superlinear, leaving a slight gap beneath our upper bound. Such a construction would be quite interesting to us. 

\makeatletter
\renewcommand\subsubsection{\@startsection{subsubsection}{3}{\z@}%
                                     {-3.25ex\@plus -1ex \@minus -.2ex}%
                                     {-1.5ex \@plus -.2ex}
                                     {\normalfont\normalsize\bfseries}}
\makeatother

\subsubsection*{Acknowledgments.} This research was done through the MIT Summer Program in Undergraduate Research. I would like to thank Asaf Ferber for suggesting that I study ordered Ramsey numbers. I'd also like to thank Ankur Moitra and Davesh Maulik for their advice and support. I'd like to thank Nikhil Reddy for suggesting a key idea in the proof of Proposition~\ref{prop:nest}. Finally, I'm very grateful to my mentor Jake Wellens for his guidance and endless patience; this work could not have happened without him.

\appendix

\section{Convexity Inequalities}\label{section:inequalities}

We provide here proofs of Lemma~\ref{lemma:convex1} and Lemma~\ref{lemma:convex2}.

\theoremstyle{lemma}
\newtheorem*{lemma:convex1}{Lemma \ref{lemma:convex1}}
\begin{lemma:convex1}
Let $a_0, a_1, a_2, \dots,a_k \geq 0$ and $\delta > 1$ and $m > 0$ be real numbers. Let $r = {m}^{-1/(\delta-1)}$. If $s = \sum_{i=0}^k a_i \geq 1$ and $a_i \leq rs$ for all $1 \leq i \leq k$, then $$m(a_0 + ca_1^\delta + \dots + ca_k^\delta) \leq cs^\delta$$ for any $c \geq m$.
\end{lemma:convex1}

\begin{proof}
Suppose that $0 < a_i \leq a_j < rs$ for some distinct indices $1 \leq i,j \leq k$. Since $f(x) = x^\delta$ is a convex function, if we decrease $a_i$ and increase $a_j$ by a common amount $\min(a_i, rs - a_j)$, the left-hand side of the inequality increases, while the right-hand side remains constant. Furthermore, the number of values $a_i$ which are equal to neither $0$ nor $rs$ decreases. Hence, it suffices to prove the inequality in the case where no two such values exist. Without loss of generality, we have $a_1 = \dots = a_{n-1} = rs$ and $a_{n+1} = \dots = a_k = 0$. Observe that $n-1 = (s - a_0 - a_n)/(rs)$.

Now we have
\begin{align*}
m(a_0 + ca_1^\delta + \dots + ca_k^\delta)
&= ma_0 + mc(n-1)(rs)^\delta + mca_n^\delta \\
&= ma_0 + mc(s - a_0 - a_n)(rs)^{\delta-1} + mca_n^\delta \\
&\leq ma_0 + mc(s - a_0)(rs)^{\delta-1} \\
&\leq ca_0 + c(s - a_0)s^{\delta-1} \\
&\leq cs^\delta
\end{align*}
where the first inequality holds since $a_n \leq rs$, so $mca_n^\delta \leq mca_n(rs)^{\delta-1}$; the second inequality holds by the assumptions $c\geq m$ and $r = m^{-1/(\delta-1)}$; and the third inequality holds since $s^{\delta-1} \geq 1$.
\end{proof}

\theoremstyle{lemma}
\newtheorem*{lemma:convex2}{Lemma \ref{lemma:convex2}}
\begin{lemma:convex2}
Let $a_1, \dots, a_k \geq 0$ and $\delta \geq 1$ be real numbers. Let $r \in (0,1)$. If $s = \sum_{i=1}^k a_i$ and $a_i \leq rs$ for all $1 \leq i \leq k$, then $$a_1^\delta + \dots + a_k^\delta \leq r^{\delta-1}s^\delta.$$
\end{lemma:convex2}

\begin{proof}
As in the previous lemma, we only need to prove the case where $a_1 = \dots + a_{n-1} = rs$ and $a_{n+1} = \dots = a_k = 0$, since all other cases can be ``sharpened'' into this one. As before but dropping the $a_0$-term, $n-1 = (s - a_n)/(rs)$. The bound is now simple:
\begin{align*}
a_1^\delta + \dots + a_k^\delta
&= (n-1)(rs)^\delta + a_n^\delta \\
&= (s-a_n)(rs)^{\delta-1} + a_n^\delta \\
&\leq r^{\delta-1}s^\delta. \qedhere
\end{align*}
\end{proof}

\end{document}